\newtheorem{theorem}{Theorem}[section]
\newtheorem{lemma}{Lemma}[section]
\newtheorem{proposition}{Proposition}[section]
\theoremstyle{definition}
\newtheorem{remark}{Remark}[section]
\numberwithin{equation}{section}
\begin{document}
\title{Fractional multiplicative processes}
\author{Julien Barral}
\address{INRIA Rocquencourt, B.P. 105, 78153 Le Chesnay Cedex, France}
\email{Julien.Barral@inria.fr}

\author{Beno\^\i t Mandelbrot}
\address{Yale University, Mathematics Department, New Haven CT 06520, USA}
\email{Benoit.Mandelbrot@yale.edu}

\begin{abstract}
Statistically self-similar measures on $[0,1]$ are 
limit of multiplicative cascades of random weights distributed on
the $b$-adic subintervals of $[0,1]$. These weights are i.i.d,
positive, and of expectation $1/b$. We extend these cascades
naturally by allowing the random weights to
take negative values. This yields martingales taking values in the space of continuous functions on $[0,1]$. Specifically, we
consider for each $H\in (0,1)$ the martingale $(B_{n})_{n\geq1}$
obtained when the weights take the values $-b^{-H}$ and $b^{-H}$, in order to get $B_n$ converging almost surely uniformly to a statistically self-similar function $B$
whose H\"{o}lder regularity and fractal properties are comparable with that of the fractional
Brownian motion of exponent $H$. This indeed holds when $H\in(1/2,1)$. Also the construction introduces a
new kind of law, one that it is stable under random
weighted averaging and satisfies the same functional equation as
the standard symmetric stable law of index $1/H$. When $H\in(0,1/2]$, to the
contrary, $B_n$ diverges almost surely. However, a natural normalization
factor $ a_n$ makes the normalized correlated random walk $
B_n / a_n$ converge in law, as $n$ tends to $\infty$, to the restriction to $[0,1]$ of the
standard Brownian motion. Limit theorems are also associated with
the case $H>1/2$.
\end{abstract}

\subjclass[2000]{Primary: 60F05, 60F15, 60F17, 60G18, 60G42; Secondary: 28A78}

\keywords{Random functions, Martingales, Central Limit Theorem, Brownian Motion, Laws stable under random weighted mean, Fractals, Hausdorff dimension.}

\maketitle

\section{Introduction and results}
Measure-valued martingales associated with cascades were introduced in \cite{M1,M3} as a ``canonical'' model
for intermittent turbulence. They are generated by multiplicative cascades
of positive random weights distributed on the nodes of a homogeneous
tree. When non-degenerate, these martingales
converge to singular multifractal measures whose fine study has led to numerous developments, both in probability and geometric measure theories (see \cite{M1,KP,DL,Gu,K3,CK,HoWa, Fal,
Mol,Ar,B1,OW,BM,BS}). We consider the natural extension of these martingales consisting in
allowing the random weights to take negative values.

We simplify the exposition by using cascades in basis 2 (the necessary complements to extend our results in basis $b\ge 3$ are given in Remark~\ref{rem1.4}). The dyadic
closed subintervals of $[0,1]$ are naturally encoded by the nodes of
the binary tree $T=\bigcup_{n\ge 0}\{0,1\}^n$, with the convention
that $\{0,1\}^0$ contains the root of $T$ denoted $\emptyset$. As in
the definition of  positive canonical cascades \cite{M1}, we
associate to each element $w$ of $T$ a real valued random weight $W(w)$; these
weights are i.i.d and $\mathbb{E}(W)$ is defined and equal to $1/2$.
A sequence of random continuous piecewise linear functions $(B_{n})_{n\geq1}$
is then obtained as follows: $B_{n}(0)=0$; $B_{n}$ is linear over
every dyadic interval $I$ of the $n^{\mbox{{\small th}}}$
generation; if $I$ is encoded by the node $w_1w_2\cdots w_n$, i.e.
$I=I_w:=[\sum_{k=1}^nw_k2^{-k},2^{-n}+\sum_{k=1}^nw_k2^{-k}]$, the
increment of $B_n$ over $I$ is the product $W(w_1)W(w_1w_2)\cdots
W(w_1w_2\cdots w_n)$. If $W$ is non-negative, the derivatives in the distributions sense of the
functions $B_{n}$ form the
measure-valued martingale considered in \cite{M1,M3,KP}.

\smallskip

This paper investigates the signed cascades in
which the weight $W$ takes the same absolute value throughout, in order to generate fractional Brownian motion (fBm) like processes (see
\cite{Kol,M2} for the definition of fBm). It is
not difficult to see that in this case, for some $H\in(-\infty,1]$, $W$ must be of
the form $W= \epsilon\,  2^{-H}$, where $ \epsilon $ is a random
variable taking the values $1$ and $-1$ with respective
probabilities $ p^+=(1+2^{H-1})/{2}$ and $
p^-=(1-2^{H-1})/{2}$. Then let us reformulate the definition of  $(B_n)_{n\ge 1}$. 

Consider a sequence $(
\epsilon (w))_{w\in T}$ of independent copies of $ \epsilon $ and for every $n\ge 1$ and $w=w_1\cdots w_n\in\{0,1\}^n$ define
\begin{equation}\label{eps}
\boldsymbol{\epsilon}(w)=\prod_{k=1}^n \epsilon  (w_1\cdots w_k) \in\{-1,1\}.
\end{equation}
We can write $B_n$ as a normalized correlated random walk as follows: For $n\geq 1$ and
$0\leq k<2^{n}$ define $\xi_{k}^{(n)}=\boldsymbol{\epsilon}(w)$, where $w=w_1\cdots w_n$ is the
unique element of $\{0,1\}^n$ such that $t_{w}=\sum_{i=1}^nw_i2^{-i}=k2^{-n}$. The random
variables $\xi_{k}^{(n)}$, $0\le k<2^{n}$, are identically
distributed and they take values in $\{-1,1\}$. Also, consider the
random walk 
$$
 S_{r}^{(n)}=\sum_{k=0}^{r-1}\xi_{k}^{(n)},\ 0\le
r<2^n$$ 
(with the convention $S_{-1}^{(n)}=0$). Then for $t\in [0,1]$
we have
\begin{equation}\label{Bn}
B_n(t)=2^{-nH}\left[  S_{[2^{n}t]}^{(n)}+(2^{n}t-[2^{n}t])\xi_{\lbrack
2^{n}t]}^{(n)} \right].
\end{equation}
An equivalent definition of $(B_n)_{n\ge 1}$ is 
$$
 B_{n}(t)=2^{-nH}\int_{0}^{t} 2^{n}  \epsilon (u_1) \cdots \epsilon
(u_1\cdots u_n)\, \text{d}u,
$$
where the sequence $(u_k)_{k\ge 1}$ stands for the digits of $u$ in
basis $2$. This second definition shows by
inspection that this sequence of random continuous functions forms a
martingale with respect to the filtration $(\mathcal{F}_n)_{n\ge 1}$, where $\mathcal{F}_n=\sigma\big\{\epsilon (w):w\in\cup_{k=1}^n\{0,1\}^k\big \}$. 

For every $p\ge 0$ and $w=w_1\cdots w_p\in\{0,1\}^p$ we consider the copy of $( B_{n})_{n\ge 1}$ defined by
$$
  B_{n}(w) (t)=2^{-nH}\int_{0}^{t} 2^{n}  \epsilon (w\cdot u_1)
\cdots \epsilon  (w\cdot u_1\cdots u_n)\, \text{d}u,\quad (n\ge 1),
$$
where $w\cdot u_1\cdots u_k$ is the concatenation of the words $w$ and $u_1\cdots u_k$. By construction, $ B_{n}(\emptyset)= B_{n}$ and the following stochastic scaling invariance holds. With probability 1, for all $n\ge 1$ and $t\in I_w$
\begin{equation}\label{self-sim}
 B_{p+n}(t)- B_{p+n}(t_{w})= \boldsymbol{\epsilon}(w)2^{-pH}  B_{n}(w)\left( S_{w_{p}}^{-1}\circ
\cdots\circ S_{w_{1}}^{-1} (t)\right)
\end{equation}
where $ S_0(t)=t/2 \text{ and }S_1(t)=(t+1)/2$.

The previous properties of $ B_{n}$ may seem to suggest that if
$H\in(0,1)$, the construction provides a simple way to generate a sequence of normalized random walks (see (\ref{Bn})) converging almost
surely uniformly to a function $ B $ possessing scaling and fractal properties close to
those of a fBm of exponent $H$. In fact, our study of $(B_{n})_{n\ge 1}$ shows the situation to be subtler and heavily dependent on~$H$, a kind of phase transition arising at $H=1/2$. 

When $H\in (1/2,1)$, the martingale $(B_n)_{n\ge 1}$ indeed converges as expected as $n$ tends to $\infty$ (Theorem~\ref{th2}). This is illustrated in Figures~\ref{H=.95} and~\ref{H=.7}. The pointwise H\"{o}lder exponent of the almost sure limit $ B $ is equal to $H$ everywhere, and the Hausdorff dimension
of the graph of $ B $ is $2-H$. Moreover, the process $ B $
possesses scaling invariance properties relative to the dyadic grid,
with $H$ playing the role of a Hurst exponent, as can be seen by
letting $n$ tend to $\infty$ in~(\ref{self-sim}). Furthermore,  the normalized process $B/\sqrt{\mathbb{E}(B(1)^2)}$ converges in law to the standard Brownian motion as $H\searrow 1/2$ (Theorem~\ref{newTCL1}). Thus, $B$ shares a lot of properties with fBm of exponent $H$, though it has not stationary increments and it is not Gaussian (see Remark \ref{rem1.1}). When $H\in (-\infty,1/2]$, the martingale is not bounded in $L^2$ norm and it diverges. However, the normalized sequence $B_n/\sqrt{\mathbb{E}(B_n(1)^2)}$ converges in law to the standard Brownian motion as $n$ tends to $\infty$ (Theorem~\ref{newTCL}). This is illutrated in Figures~\ref{H=.5} and~\ref{H=-2}. When $H<1/2$ this result is a version of Donsker's theorem, but for triangular arrays with unusual strong correlations. When $H=1/2$, the same strong correlations hold, but $B_n/\sqrt{\mathbb{E}(B_n(1)^2)}$ corresponds to a correlated random walk normalized in the same unusual way as very different correlated random walks considered in \cite{Enr} and weakly converging to Brownian motion as well (see the discussion in Remark~\ref{rem1.3}). 

Our results are stated and commented in the following theorems and remarks. Then we relate them with some works on laws
that are stable under random weighted mean. 

$\mathcal{C}([0,1])$ will denote  the space of real-valued
continuous functions over $[0,1]$ endowed with the uniform norm
denoted by $\|\ \, \|_{\infty}$, and ${\rm Id}_{[0,1]}$ will denote the identity function over $[0,1]$. We refer to \cite{Falc} for the definitions of Hausdorff and box dimensions of sets in $\mathbb{R}^d$ as well as \cite{Bil} for the theory of the convergence of probability measures on metric spaces.

\medskip

\noindent
{\it The case $H\in (1/2,1]$.}

\begin{theorem}
\label{th2} Let $H\in(1/2,1]$. The $\mathcal{C}([0,1])$-valued martingale $( B_{n})_{n\geq1}$ converges almost surely and in $L^{q}$ norm for all $q\ge 1$ to a limit function of expectation ${\rm{Id}}_{[0,1]}$. Denote this limit by $ B $ and for all $w\in T$ the limit of $ B_{n}(w)$ by $ B (w)$.  With probability 1,

\begin{enumerate}

\item For all $p\ge 1$, $w\in \{0,1\}^p$ and $t\in I_w$
\begin{equation}\label{FONC3}
 B (t)- B (t_{w})= \boldsymbol{\epsilon}(w)\,  2^{-pH} B (w)\left( S_{w_{p}}^{-1}\circ
\cdots\circ S_{w_{1}}^{-1} (t)\right);
\end{equation}
\item $ B $ is $\alpha$-H\"older continuous for all $\alpha\in (0,H)$, and it has everywhere a pointwise H\"{o}lder exponent equal to $H$, i.e for all $t\in [0,1]$
$$
\displaystyle \liminf_{\substack{s\to t\\ s\neq t}}\frac{\log | B (s)- B (t)|}{\log |s-t|}=H;
$$
\item The Hausdorff and box dimensions of the graph of $ B $ are equal to $2-H$.
\end{enumerate}
\end{theorem}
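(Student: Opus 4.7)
The plan is to first establish almost-sure uniform convergence and $L^q$-boundedness of $(B_n)$ to a continuous limit $B$, then to deduce~(1) immediately by passing to the limit in~(\ref{self-sim}) using the continuity of $B$, and finally to derive~(2) and~(3) from~(1) together with moment estimates. For convergence, $Z_n:=B_n(1)$ is a martingale whose limit (once existence is granted) satisfies the fixed-point-in-law equation
\[
Z\;\stackrel{d}{=}\;2^{-H}\bigl(\epsilon(0)Z_0+\epsilon(1)Z_1\bigr),
\]
with $Z_0,Z_1$ i.i.d.\ copies of $Z$ independent of the signs. Taking even moments and isolating the diagonal contribution gives $(1-2\cdot 2^{-2qH})\mathbb{E}(Z^{2q})=$ a polynomial in lower moments of $Z$, so by induction on $q$ we obtain $\sup_n\mathbb{E}(Z_n^{2q})<\infty$ for every $q\ge 1$, the key inequality $2\cdot 2^{-2qH}<1$ following from $H>1/2$. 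The same fixed-point trick bounds $\mathbb{E}(\|B\|_\infty^q)$ for all $q$. To upgrade pointwise $L^q$-convergence to uniform almost-sure convergence, I would bound $\mathbb{E}(\|B_{n+1}-B_n\|_\infty^q)$ geometrically: on each dyadic interval of generation $n$, $B_{n+1}-B_n$ is a tent of peak amplitude $O(2^{-nH})$ times a bounded centered random variable, so independence across the $2^n$ intervals and a sufficiently high $q$ give geometric decay, and Borel--Cantelli then yields a.s.\ summability of $\|B_{n+1}-B_n\|_\infty$.

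For~(2), the dyadic chaining
\[
|B(t)-B(s)|\;\le\;\sum_{p\ge p_0}2\cdot 2^{-pH}\max_{|w|=p}\|B(w)\|_\infty,\qquad 2^{-p_0}\simeq |t-s|,
\]
combined with $\max_{|w|=p}\|B(w)\|_\infty=O(2^{p\eta})$ a.s.\ for every $\eta>0$ (by a union bound using the $L^q$-bound on $\|B\|_\infty$), gives global $\alpha$-Hölder regularity for every $\alpha<H$, hence pointwise exponent $\ge H$ everywhere. The reverse inequality uses the exact scaling~(\ref{FONC3}): the oscillation of $B$ on the dyadic interval of generation $p$ containing $t$ equals $2^{-pH}$ times the oscillation of an i.i.d.\ copy $B(w^{(p)}(t))$. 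A Borel--Cantelli argument exploiting the independence of $\{B(w)\}_{|w|=p}$ (e.g.\ applied to sibling subintervals of $I_{w^{(p)}(t)}$ at each level, which are disjoint from the branch defining $t$) shows that the oscillation does not decay faster than $2^{-pH}$ a.s., simultaneously for every $t$ after a countable dyadic approximation.

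For~(3), the upper bound $\overline{\dim}_B\,\mathrm{graph}(B)\le 2-H$ follows from Hölder continuity by the classical box-counting estimate. For the lower bound $\dim_H\,\mathrm{graph}(B)\ge 2-H$, I would apply Frostman's energy method to the pushforward $\mu$ of Lebesgue measure on $[0,1]$ under $t\mapsto(t,B(t))$: finiteness of $\mathbb{E}\bigl[\iint(|t-u|^2+(B(t)-B(u))^2)^{-\sigma/2}\,dt\,du\bigr]$ for every $\sigma<2-H$ reduces to the pointwise estimate $\mathbb{E}[(|t-u|^2+(B(t)-B(u))^2)^{-\sigma/2}]=O(|t-u|^{-\sigma H})$, itself following from a bound on the density of $B(t)-B(u)$ near zero derivable recursively from the functional equation. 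The main obstacles I anticipate are twofold: the uniform-in-$t$ Borel--Cantelli argument needed for the exact pointwise Hölder exponent in~(2), and obtaining sufficiently sharp density estimates for $B(t)-B(u)$ to control the energy integrand in~(3) uniformly in the endpoints.
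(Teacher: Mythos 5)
Your architecture matches the paper's: moment recursion from the fixed-point equation for $Z_n=B_n(1)$, passage to the limit in (\ref{self-sim}) for part (1), global H\"older regularity by dyadic chaining plus a Dvoretsky--Erd\H{o}s--Kakutani-style argument for the sharp exponent in (2), and the energy method for (3). The convergence and global H\"older parts are sound (one small correction: $B_{n+1}$ and $B_n$ do \emph{not} agree at generation-$n$ dyadic points, so $B_{n+1}-B_n$ is not a localized tent; what saves your estimate is that its increments over the $2^{n+1}$ generation-$(n+1)$ intervals are conditionally independent, centered and $O(2^{-nH})$, giving $\|B_{n+1}-B_n\|_\infty=O_{L^q}\big(2^{n(1/2-H)}\big)$, which decays geometrically precisely because $H>1/2$).

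The genuine gap is the complete absence of an anti-concentration estimate for $Z=B(1)$ near $0$, which is the paper's key technical lemma and is indispensable for both halves you flag as ``obstacles.'' Independence of the copies $B(w)$ is not enough: you must also know that each copy has only a small chance of being small. For the upper bound in (2), ruling out exponent $>H+\varepsilon$ simultaneously at all $t$ forces you to beat a union bound over the roughly $2^p$ dyadic positions at scale $p$; the paper does this by examining $K>4/\varepsilon$ consecutive independent increments and using $\mathbb{P}(|Z|\le \delta)\le \delta^{1/2}\,\mathbb{E}(|Z|^{-1/2})$, where $\mathbb{E}(|Z|^{-1/2})<\infty$ follows from iterating $\varphi(t)=\big[p^{+}\varphi(2^{-H}t)+p^{-}\varphi(-2^{-H}t)\big]^{2}$, which squares the bound on the characteristic function at each dyadic scale and yields $\varphi(t)=O\big(\rho^{|t|^{1/H}}\big)$, hence a bounded smooth density for $Z$. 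Your one-sibling-per-level Borel--Cantelli gives only one factor $\mathbb{P}(|Z|\le\cdot)$ per level, and without a quantitative small-ball bound it is not clear this beats $2^p$ (for $H$ near $1/2$ even $\mathbb{P}(Z=0)<1/2$ is not free, since Paley--Zygmund degenerates there). For (3), the same lemma is what makes the energy integrand controllable \emph{uniformly in $t,s$}: for non-dyadic endpoints $B(t)-B(s)$ is a complicated series, and the paper's device is to condition on the signs and isolate a single full dyadic increment $Y(J_0)$ inside it, so that the conditional density of the whole increment is bounded by $\|\varphi\|_{L^1}$ independently of $t,s$. That uniformity is exactly what your proposed ``recursive density bound'' must deliver, and it does not follow routinely from the functional equation without the characteristic-function decay and the isolation trick.
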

 For $H\in (1/2,1)$ define $\sigma_H=(2-2^{2-2H})^{-1/2}=\sqrt{\mathbb{E}(B(1)^2)}$ (this equality will be justified in the proof of the next result) and denote $B$ by $B_H$.
\begin{theorem}\label{newTCL1} The family of continuous processes $\{B_H/\sigma_H\}_{H\in (1/2,1)}$ converges in law, as $H$ tends to $1/2$, to the restriction to $[0,1]$ of the standard Brownian motion.
\end{theorem}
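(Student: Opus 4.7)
The plan is to prove weak convergence in $\mathcal{C}([0,1])$ by establishing tightness of $\{B_H/\sigma_H\}_{H\in(1/2,1)}$ together with convergence of finite-dimensional distributions to those of standard Brownian motion. The structural fact that drives the whole argument, obtained by iterating the self-similarity (\ref{FONC3}) to level $p$, is that for any dyadic point $t=k/2^p$,
$$B_H(k/2^p)\;=\;2^{-pH}\sum_{j=0}^{k-1}\xi_j^{(p)}\, Y_H^{(j)},$$
where $Y_H^{(j)}:=B_H(w^{(j)})(1)$ are i.i.d.\ copies of $Y_H:=B_H(1)$, independent of the family $(\xi_j^{(p)})_{j<2^p}$ defined in (\ref{eps})--(\ref{Bn}).

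The first step is to prove $Y_H/\sigma_H\Rightarrow N(0,1)$ as $H\searrow 1/2$ by the method of moments. The identity $Y_H=2^{-H}\bigl(\epsilon(0)\,Y_H^{(0)}+\epsilon(1)\,Y_H^{(1)}\bigr)$ gives, for each $k\ge 2$, the recursion
$$m_k(H)\bigl(1-2\,\mathbb{E}(\epsilon^k)\cdot 2^{-kH}\bigr) \;=\; 2^{-kH}\sum_{j=1}^{k-1}\binom{k}{j}\mathbb{E}(\epsilon^j)\,\mathbb{E}(\epsilon^{k-j})\,m_j(H)\,m_{k-j}(H),$$
with $m_k(H):=\mathbb{E}(Y_H^k)$, finite for every $k$ by the $L^q$-convergence asserted in Theorem~\ref{th2}; recall $\mathbb{E}(\epsilon^k)=1$ for even $k$ and $\mathbb{E}(\epsilon^k)=2^{H-1}$ for odd $k$. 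One proves by induction on $k$ that $m_k(H)/\sigma_H^k$ converges as $H\to 1/2$ to the $k$-th moment of $N(0,1)$. For odd $k$ each term of the normalized recursion contains a factor $m_j/\sigma_H^j$ with $j$ odd, which vanishes by the induction hypothesis; for even $k$ the limiting normalized recursion reads
$$\bigl(2^{k/2}-2\bigr)(k-1)!! \;=\; \sum_{\substack{j=2\\ j\text{ even}}}^{k-2}\binom{k}{j}(j-1)!!\,(k-j-1)!!,$$
which is exactly the identity obtained from $\mathbb{E}\bigl((Z_1+Z_2)^k\bigr)=2^{k/2}(k-1)!!$ for independent standard Gaussians. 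Since the Gaussian law is determined by its moments, this yields $Y_H/\sigma_H\Rightarrow N(0,1)$.

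For the finite-dimensional convergence, fix dyadic times $0\le t_1<\cdots<t_m\le 1$ of common level $p$ and write $t_i=k_i/2^p$. The independence of $(Y_H^{(j)})_j$ from $(\xi_j^{(p)})_j$, combined with the first step and the (elementary) convergence of $(\xi_j^{(p)})_{j<2^p}$ to its $H=1/2$ analogue $(\xi_j^{(p,\ast)})_{j<2^p}$, yields via the continuous mapping theorem
$$\bigl(B_H(t_i)/\sigma_H\bigr)_{i=1}^m \;\Longrightarrow\; \Bigl(2^{-p/2}\sum_{j=0}^{k_i-1}\xi_j^{(p,\ast)}Z_j\Bigr)_{i=1}^m,$$
with $(Z_j)_j$ i.i.d.\ $N(0,1)$, independent of the $\xi^{(p,\ast)}$'s. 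The crucial observation is that for any (possibly correlated) $\pm 1$-valued family $(\xi_j)$ independent of an i.i.d.\ standard Gaussian sequence $(Z_j)$, the products $(\xi_j Z_j)_j$ are themselves i.i.d.\ $N(0,1)$: conditioning on $(\xi_j)$, the joint characteristic function equals $\prod_j e^{-s_j^2/2}$ regardless of the dependence among the $\xi_j$. Hence the limit is Brownian motion sampled at the $t_i$. Arbitrary times are handled by letting $p\to\infty$ and invoking tightness together with the continuity of the Brownian limit.

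Tightness will be established via Kolmogorov's criterion at order $q=4$: using the self-similarity (\ref{FONC3}) to reduce increments to scaled copies of $B_H$ and the uniform moment control of $Y_H/\sigma_H$ furnished by the first step, one obtains $\mathbb{E}(|B_H(t)-B_H(s)|^4)/\sigma_H^4 \le C\,|t-s|^{1+\delta}$ for some $\delta>0$ uniformly in $H$ close to $1/2$, which suffices. The main obstacle is the moment induction of the first step: one must check that the denominator $1-2\mathbb{E}(\epsilon^k)\cdot 2^{-kH}$ stays bounded away from zero uniformly in $H$ near $1/2$ (which holds for $k\ge 3$, the cases $k=1,2$ being explicit) and verify the Gaussian combinatorial identity above; obtaining the uniform fourth-moment estimate needed for tightness is a secondary but essential technicality that also rests on the moment bounds of the first step.
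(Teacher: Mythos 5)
Your proposal is correct and follows essentially the same route as the paper's proof (Proposition 4.1): method of moments via the fixed-point equation to get $B_H(1)/\sigma_H\Rightarrow\mathcal{N}(0,1)$, identification of the limit through the recursion satisfied by the even Gaussian moments, convergence of the dyadic-increment vectors using independence of the signs $\boldsymbol{\epsilon}(w)$ from the i.i.d.\ copies $B_H(w)(1)$ together with the symmetrization observation $\boldsymbol{\epsilon}(w)^2=1$, and tightness from uniform moment control of dyadic increments. The only cosmetic difference is that you invoke Kolmogorov's fourth-moment criterion for tightness where the paper runs a union bound over dyadic scales with higher-order Markov inequalities; both rest on the same uniform bound $\sup_{H}\mathbb{E}\bigl(|B_H(1)/\sigma_H|^{2K}\bigr)<\infty$ from your first step.
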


\begin{remark}\label{rem1.1}
{\rm When $H=1$, the weights are positive and the construction coincides with the trivial positive cascade: with probability 1, $ B_{n}(t)=t$ for all $t\in [0,1]$ and $n\ge 1$. When $H\in (1/2,1)$, the limit process $ B-\text{Id}_{[0,1]} $ is not fractional Brownian motion. This can be seen on (\ref{FONC3}) since $\boldsymbol{\epsilon}(w)$ is not symmetric. Also, a
computation shows that the third moment of the centered random
variable $B (1)-1$ does not vanish, so the process is not Gaussian.}
\end{remark}

\medskip

\noindent
{\it The case $H\in [-\infty,1/2]$.}

\smallskip

For $H\in (-\infty,1/2]$, the sequence $(B_n)_{n\ge 1}$ is not bounded in $L^2$ norm. To get a natural normalization making it bounded in $L^2$ norm let
$$
\sigma=
\begin{cases}
\displaystyle\sqrt{1+(2^{2-2H}-2)^{-1}} & \mbox {if $H<1/2$}\\
\displaystyle1/{\sqrt{2}}& \mbox {if $H=1/2$}\\
\end{cases}
$$
and for $w\in T$ and $n\ge 1$ define
$$
X_{n}(w)=\begin{cases}
B_{n}(w)/\sigma 2^{n(1/2-H)}& \mbox {if $H<1/2$}\\
 B_{n}(w)/\sigma\sqrt{n}& \mbox {if $H=1/2$}\\
\end{cases}
$$
Also simply denote $X_{n}(\emptyset)$ by $X_{n}$. The process $X_n$ is equivalent to $B_n/\sqrt{\mathbb{E}(B_n(1)^2)}$ as $n$ tends to $\infty$ (this fact will be justified in the proof of the next result). If we let $H$ tend to $-\infty$ in the definition of $\epsilon$ and $\sigma$, then $\epsilon$ becomes a symmetric random variable taking values in $\{-1,1\}$, $\sigma=1$, and the sequence $(X_n)_{n\ge 1}$ has the natural extension to the case $H=-\infty$ given by $X_n(t)=\displaystyle\frac{1}{\sqrt{2^{n}}}\left[  S_{[2^{n}t]}^{(n)}+(2^{n}t-[2^{n}t])\xi_{\lbrack
2^{n}t]}^{(n)} \right]$ (see Remark~\ref{rem1.3}).  
\begin{theorem}\label{newTCL}
For every $H\in [-\infty,1/2]$ the sequence of continuous processes $(X_{n})_{n\ge 1}$ converges in law, as $n$ tends to $\infty$, to the restriction to $[0,1]$ of the standard Brownian motion.
\end{theorem}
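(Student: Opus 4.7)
\smallskip

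The plan is the classical two-step one: prove convergence of finite-dimensional marginals, then tightness in $\mathcal{C}([0,1])$. When $H=-\infty$ the signs $\epsilon$ are symmetric, and a short character argument shows that the variables $(\xi_k^{(n)})_{0\le k<2^n}$ are in fact i.i.d.\ uniform on $\{-1,1\}$: indeed any product $\prod_{k\in A}\xi_k^{(n)}$ with $A\neq\emptyset$ contains at least one sign at level $n$ with odd multiplicity, hence has mean zero. So \eqref{Bn} identifies $X_n$ with Donsker's polygonal approximation and the theorem reduces to Donsker's theorem. The remainder of the plan concerns $H\in(-\infty,1/2]$.

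For the finite-dimensional marginals, specialising \eqref{self-sim} to $p=1$ gives the identity
\begin{equation*}
B_{n+1}(1)=\epsilon(0)\, 2^{-H}\,B_n(0)(1)+\epsilon(1)\, 2^{-H}\,B_n(1)(1),
\end{equation*}
in which the four random quantities on the right are mutually independent and $B_n(0)(1),B_n(1)(1)\stackrel{d}{=}B_n(1)$. This yields the functional recursion
\begin{equation*}
\phi_{n+1}(\lambda)=\bigl(\Re\phi_n(\lambda 2^{-H})+i\,2^{H-1}\,\Im\phi_n(\lambda 2^{-H})\bigr)^2
\end{equation*}
for the characteristic function $\phi_n$ of $B_n(1)$, and in parallel the exact second-moment recursion $\mathbb{E}(B_{n+1}(1)^2)=2^{1-2H}\mathbb{E}(B_n(1)^2)+1/2$; the latter yields $\mathbb{E}(B_n(1)^2)\sim\sigma^2 2^{n(1-2H)}$ for $H<1/2$ and $\mathbb{E}(B_n(1)^2)\sim n/2$ for $H=1/2$, which justifies both the values of $\sigma$ and the announced equivalence $X_n\sim B_n/\sqrt{\mathbb{E}(B_n(1)^2)}$. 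Evaluating the functional recursion at the rescaled argument $\lambda_n=\lambda/(\sigma 2^{n(1/2-H)})$ (resp.\ $\lambda/(\sigma\sqrt{n})$) and expanding $\phi_n$ to second order around the origin, with the remainder controlled via a parallel recursion on $\mathbb{E}(B_n(1)^4)$, should give $\phi_n(\lambda_n)\to e^{-\lambda^2/2}$, whence $X_n(1)\Rightarrow\mathcal{N}(0,1)$. Joint convergence at dyadic times $t_1<\cdots<t_k$ would follow by the same method applied at a level $p$ for which the $t_i$'s lie in distinct level-$p$ dyadic intervals: \eqref{self-sim} then expresses the increments $X_n(t_j)-X_n(t_{j-1})$ through disjoint, hence independent, rescaled copies of $X_{n-p}$, and the Brownian covariance $t_i\wedge t_j$ is recovered.

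For tightness I would establish a Kolmogorov-type bound
\begin{equation*}
\mathbb{E}\bigl(|X_n(t)-X_n(s)|^4\bigr)\le C\,|t-s|^{1+\delta}
\end{equation*}
uniform in $n$ and in $s,t\in[0,1]$, for some $\delta>0$. At dyadic $s,t$, \eqref{self-sim} decomposes the increment as a sum over disjoint nodes $w$ of contributions $\boldsymbol{\epsilon}(w)\,2^{-|w|H}B_n(w)(\cdot)$, so the fourth moment reduces to the asymptotics of $\mathbb{E}(B_n(1)^4)$, which satisfies its own recursion amenable to the same analysis as the second-moment one. Piecewise linearity of $X_n$ extends the bound to arbitrary $s,t$, and Kolmogorov's criterion concludes.

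The main obstacle is that for $H>-\infty$ the $\xi_k^{(n)}$ are heavily correlated through the cascade, and are neither independent nor a martingale difference array (since $\mathbb{E}\epsilon=2^{H-1}\neq 0$), so no classical Donsker or martingale CLT applies off the shelf; the argument must rely on the exact multiplicative recursion above. I expect the most delicate subcase to be $H=1/2$, where the variance recursion is marginal ($2^{1-2H}=1$), which forces the logarithmic normalization $\sigma\sqrt{n}$ and demands a careful Taylor analysis of $\phi_n$ to extract the Gaussian limit.
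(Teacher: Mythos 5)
Your overall architecture (one-dimensional CLT for $X_n(1)$, then finite-dimensional convergence via the independent subtrees at a fixed level $p$, then tightness) coincides with the paper's, and several ingredients are sound: the identity $Z_{n+1}=2^{-H}\epsilon(0)Z_n(0)+2^{-H}\epsilon(1)Z_n(1)$ and the exact variance recursion $\mathbb{E}(Z_{n+1}^2)=2^{1-2H}\mathbb{E}(Z_n^2)+1/2$ are what the paper uses; your Walsh/character argument showing that at $H=-\infty$ the $\xi_k^{(n)}$ are exactly i.i.d.\ fair signs is correct and in fact slicker than the paper's conditional-Donsker remark; and your tightness plan (uniform fourth-moment bound on dyadic increments from \eqref{self-sim}, extended by piecewise linearity, then a Kolmogorov-type criterion) is essentially equivalent to the paper's Lemma~\ref{tight}, which uses the same moment bounds but concludes via a union bound over dyadic scales rather than the Kolmogorov moment criterion.

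The genuine gap is in the central step, the convergence $X_n(1)\Rightarrow\mathcal{N}(0,1)$. Your characteristic-function recursion $\phi_{n+1}(\lambda)=\bigl(\Re\phi_n(2^{-H}\lambda)+i\,2^{H-1}\Im\phi_n(2^{-H}\lambda)\bigr)^2$ is correct, and $e^{-\lambda^2/2}$ is indeed a fixed point of the induced map on the rescaled functions $\psi_n(\lambda)=\phi_n(\lambda/\sigma 2^{n(1/2-H)})$, since $2^{-H}$ times the level-$(n+1)$ scale equals $1/\sqrt2$ times the level-$n$ scale. But ``expanding $\phi_n$ to second order \dots should give $\phi_n(\lambda_n)\to e^{-\lambda^2/2}$'' is precisely the part that needs a proof: the recursion squares at each step, so a naive error term can double per iteration while the argument only shrinks by $\sqrt2$, and one must show the iteration is actually stable around the Gaussian fixed point (e.g.\ by an induction giving $|\psi_n(\lambda)-e^{-\lambda^2/2}|\le C|\lambda|^3 2^{-n\delta}$, or a Zolotarev-type contraction argument); moreover $\mathbb{E}(Z_n)=1\ne 0$, so the first-order term in your Taylor expansion does not vanish and must be tracked through the iteration, which is an additional nuisance at $H=1/2$ where everything is marginal. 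The paper avoids this entirely by the method of moments (Lemma~\ref{ident}): it shows $\mathbb{E}(X_n(1)^q)$ converges for every $q$, with odd limits $0$ and even limits satisfying the Gaussian recursion \eqref{momnorm}, the point being that the moment recursion \eqref{H} has the explicit contraction factor $2^{1-p}<1$ for order $2p$, $p\ge2$, so convergence is immediate; moment-determinacy of $\mathcal{N}(0,1)$ then finishes. Your route can be completed, but as written the key analytic estimate is asserted rather than proved. A second, minor gloss: in the finite-dimensional step the rescaled copies $X_{n-p}(w)$ are independent, but the signs $\boldsymbol{\epsilon}(w)$, $w\in\{0,1\}^p$, multiplying them are \emph{not}; one must condition on the signs and use $\boldsymbol{\epsilon}(w)^2=1$ together with the symmetry of the Gaussian limit, as the paper does, to conclude that the limiting increments are independent.
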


\begin{remark}\label{rem1.2}
\label{dege} {\rm When $H\in (-\infty,1/2)$, 
$\limsup_{n\rightarrow\infty}\Vert
B_{n}\Vert_{\infty} {2^{-n(1/2-H)}} >0$ almost surely by Theorem~\ref{newTCL}. Thus the martingale $(
B_{n})_{n\ge 1}$ diverges in $\mathcal{C}([0,1])$. The same property
holds when $H=1/2$. Besides, Theorem~\ref{th2} says that $(
B_{n})_{n\ge 1}$ converges almost surely uniformly to a limit of expectation $\text{Id}_{[0,1]}$ when $H>1/2$. Consequently, the convergence properties of non-positive canonical
cascades strongly depend on the random weight used to generate the process. This contrasts with the positive canonical cascades martingales, which
always converge almost surely uniformly (either to a non-trivial
limit with expectation $\text{Id}_{[0,1]}$, or to \rm{0}, see
\cite{M1,KP}). }
\end{remark}

\begin{remark}\label{rem1.3}
{\rm When $H\in (-\infty,1/2]$, due to (\ref{Bn}) we have 
\begin{equation}\label{XN}
X_{n}(t)=
\begin{cases}
\displaystyle\frac{1}{\sigma\sqrt{2^{n}}}\left[  S_{[2^{n}t]}^{(n)}+(2^{n}t-[2^{n}t])\xi_{\lbrack
2^{n}t]}^{(n)} \right] &\text{if }H<1/2\\
 \displaystyle\frac{1}{\sigma\sqrt{n2^{n}}}\left[  S_{[2^{n}t]}^{(n)}+(2^{n}t-[2^{n}t])\xi_{\lbrack
2^{n}t]}^{(n)} \right] &\text{if }H=1/2
\end{cases}.
\end{equation}
When $H<1/2$, the form of $X_n$ is familiar from Donsker's theorem
(see \cite{Bil}) and its extensions to triangular arrays of 
random variables that are weakly dependent 
(see \cite{Bil,Douk}). However, the correlations of the $X_n$ dyadic increments are closely related to the natural ultrametric distance on $T$ and it seems difficult to find a way to reduce the behavior of $(X_n)_{n\ge 1}$ to that of random walks with weakly dependent increments. When $H=1/2$, the $X_n$ dyadic increments are correlated as well, and the normalization of the random walk is similar to the unusual one met in the proof of Theorem 2 in \cite{Enr} (see also Lemma 5.1 of \cite{T}) to obtain the weak convergence to Brownian motion of certain centered stationary Gaussian random walks. 

If we denote $X_n(w)(1)$ by $Y_n(w)$, the relation (\ref{fonc2}) below
yields
\begin{equation}\label{foncnorm}
Y_{n+1}=\begin{cases}
\displaystyle \frac{\epsilon (0)}{\sqrt{2}}Y_n(0)+ \frac{\epsilon (1)}{\sqrt{2}}Y_n(1)&\text{if } H<1/2\\
\displaystyle \sqrt{\frac{n}{n+1}}\Big (\frac{\epsilon
(0)}{\sqrt{2}}Y_n(0)+ \frac{\epsilon (1)}{\sqrt{2}}Y_n(1)\Big
)&\text{if }H=1/2 .
\end{cases}
\end{equation}
Consequently, assuming that $X_n$ converges in law, we
can guess thanks to (\ref{foncnorm}) that the weak limit of $Y_n$ must be the standard normal distribution. Actually, to identify this limit we exploit the recursive equations (\ref{foncnorm}) as
well as recursive equations satisfied by the moments of  the
standard normal distribution (see (\ref{momnorm}) in the proof of
Lemma~\ref{ident}). A similar approach exploiting the functional equation (\ref{FONC}) is used to prove Theorem~\ref{newTCL1}.

Letting $H$ tend to $-\infty$ yields $\sigma=1$ and a random variable $\epsilon$ that takes the
values $-1$ and $1$ with equal probability $1/2$ so that the random
walk $ S_{r}^{(n)}$ becomes symmetric. In this case, the convergence
in law to Brownian motion of $X_n$ (defined as in (\ref{XN}) in the limit $H=-\infty$) follows from standard arguments,
since $X_n$ conditioned with respect to
$\mathcal{G}_{n-1}=\sigma\big \{\boldsymbol{\epsilon} (w):w\in\{0,1\}^{n-1}\big \}$ satisfies
the Donsker's theorem assumptions (given $\mathcal{G}_{n-1}$, the $\xi_{k}^{(n)}$s are symmetric, independent, and take values $-1$ and $1$). 

If $H\in (1/2,1)$ and $\sigma$ is defined as $\sigma=\sqrt{\mathbb{E}( B (1)^{2})-1}$, the same kind of argument can be used to prove that $X_n=(B-B_n)/\sigma 2^{n(1/2-H)}$ also converges in law to Brownian motion. Indeed, due to (\ref{FONC3}), conditionally on
$\sigma\big \{\boldsymbol{\epsilon} (w):w\in\{0,1\}^{n}\big \}$, the increments of the process $2^{n/2}X_n$ over the dyadic
intervals of generation $n$ are $2^n$ independent centered random variables distributed like $(B(1)-1)/\sigma$ or $-(B(1)-1)/\sigma$, namely the 
$\boldsymbol{\epsilon} (w)(B(w)(1)-1)/\sigma$, $w\in \{0,1\}^n$, whose standard deviation is
equal to 1.}
\end{remark}
\subsection*{A link with laws that are stable under random weighted mean}

For $n\geq0$ and $w\in T$ we denote by $Z_{n}(w)$ the random
variable $ B_{n}(w)(1)$, with the convention $B_{0}(w)(1)=1$. We simply write $Z_{n}$ for $Z_{n}(\emptyset)$. By construction, for every $n\geq1$
\begin{equation}
\label{fonc2}Z_{n}=2^{-H}\epsilon (0)Z_{n-1}(0)+2^{-H}\epsilon (1)Z_{n-1}(1),
\end{equation}
where the random variables $\epsilon(0)$, $\epsilon (1)$, $Z_{n-1}
(0)$ and $Z_{n-1}(1)$ are mutually independent, $\epsilon(0)$ and
$\epsilon(1)$ are copies of $ \epsilon $, and $Z_{n-1}(0)$ and
$Z_{n-1}(1)$ are copies of $Z_{n-1}$. Relation (\ref{fonc2}) is
central in the sequel. When the martingale $(Z_n)_{n\ge 1}$ does
converge to a non trivial limit $Z$ (see Theorem~\ref{th2}), it
follows from (\ref{fonc2}) that the probability distribution of $Z$
provides a new family of what has
been called law stable by random weighted mean or fixed points of
the smoothing transformation (\cite{M1,DL,Gu}). Indeed, there exist
two independent copies $Z(0)$ and $Z(1)$ of $Z$, and two independent
and identically distributed random variables $W(0)$ and $W(1)$ ---
namely, $2^{-H}\epsilon (0)$ and $2^{-H}\epsilon (1)$ --- such that
$(W(0),W(1))$ is independent of $(Z(0),Z(1))$ and $Z$ satisfies the
following equality in distribution ($\equiv$)
\begin{equation}\label{foncgen}
Z\equiv W(0)Z(0)+W(1)Z(1).
\end{equation}
When $(W(0),W(1))$ is positive, the non-trivial positive solutions
of this equation are described in \cite{M1,KP,DL,Gu}. A class of non-positive
solutions of (\ref{foncgen}) with positive $(W(0),W(1))$ has been
exhibited in \cite{Liu3}; it naturally includes classical symmetric
stable laws of index $\alpha\in [1,2]$, which obey (\ref{foncgen})
when $W(0)=W(1)=2^{-H}$ with $H=1/\alpha\in [1/2,1]$. Actually, the
classical symmetric stable law of index $\alpha=1/H\in [1,2]$
satisfies equation (\ref{foncgen}) under the form $Z\equiv 2^{-H}\eta
(0)Z(0)+2^{-H}\eta (1)Z(1)$ as soon as $\eta(0)$ and $\eta (1)$ are independent, take values
$-1$ and $1$, and are independent of $(Z(0),Z(1))$, whatever be the distributions of  $\eta(0)$ and
$\eta(1)$. Consequently, when $(\eta(0),\eta(1))=(\epsilon(0),\epsilon(1))$, Theorem~\ref{th2} provides for each $H\in
(1/2,1]$ another probability distribution obeying the same functional
equation as the classical symmetric stable law of index $1/H$.
It is worth noting that the statistically self-similar stochastic processes
associated with these solutions have very different behaviors. In
the first case, if $H=1/\alpha\in (1/2,1]$ the process is a
symmetric stable L\'evy process $L_\alpha$ of index $\alpha$ (see \cite{Be}), so the
distributions of the increments have no finite moments of order
larger than or equal to $\alpha$, and the sample path of $L_\alpha$ have a dense
set of discontinuities and are multifractal \cite{Jaf}. In the
second case, the process is the random function $B$ of
Theorem~\ref{th2}, the distributions of the dyadic increments have a
finite moment of order $p$ for all $p>0$, and the sample path of $B$
are continuous and monofractal.

\begin{remark}\label{rem1.4}
Both the construction and results  extend to the case when the
construction grid is  $b$-adic  with $b\ge 3$. Then $W= \epsilon
b^{-H}$, where $ \epsilon =1$ with probability $(1+b^{H-1})/2$ and $
\epsilon =-1$ with probability $(1-b^{H-1})/2$. The same results
hold after formal replacement of the basis 2 by the basis $b$. Also,
$\sigma=\sqrt{1-1/b} $ if $H=1/2$,
$\sigma=\sqrt{1+(b-1)/(b^{2-2H}-b)} $ if $H<1/2$, and $\sigma_H=\sqrt{b-1}/\sqrt{b-b^{2-2H}}$ if $H>1/2$.
\end{remark}

Theorems~\ref{th2}, \ref{newTCL} and \ref{newTCL1} are proved in Sections~\ref{sec2}, \ref{sec3} and \ref{sec4} respectively.

\begin{center}
\begin{figure}
\begin{center}
{\includegraphics[width=10cm,height = 10cm]{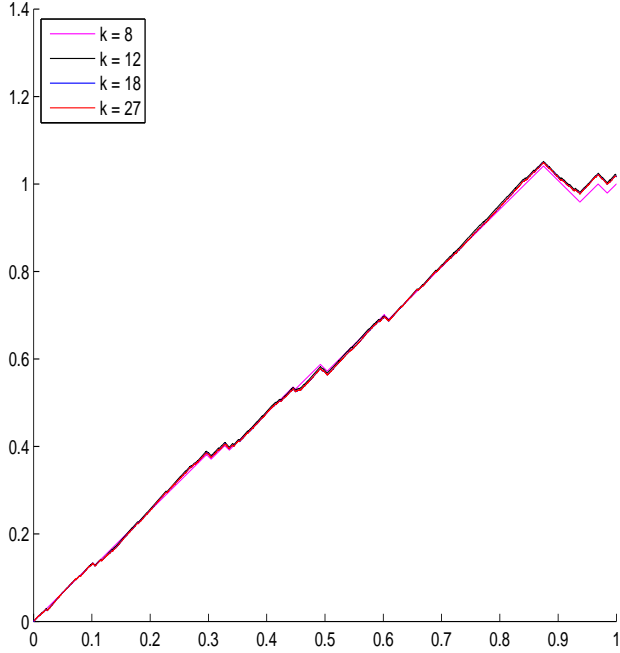}}
\vskip -1cm
\caption{$B_k$ for $k=8,\ 12,\ 18,\ 27$ in the case $b=2$ and $H=0.95$: Fast strong convergence.}
\label{H=.95}
\end{center}
\end{figure}
\end{center}
\begin{center}
\begin{figure}
\begin{center}

{\includegraphics[width=10cm,height = 10cm]{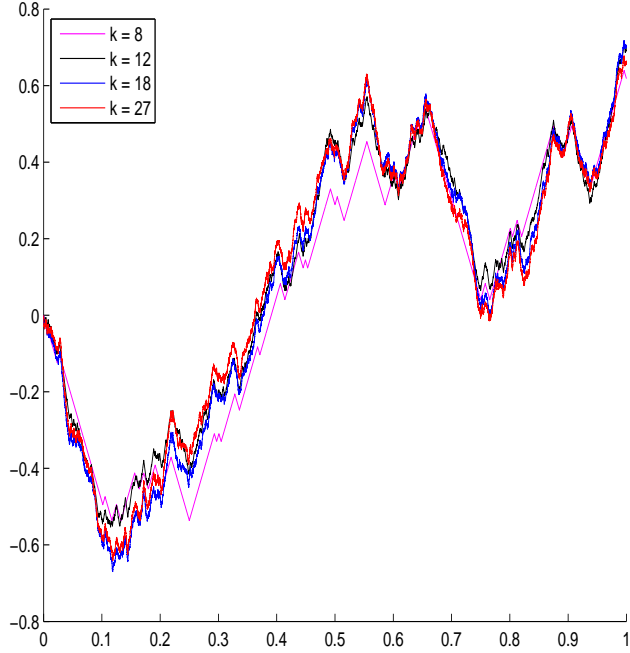}}
\vskip -.5cm
\caption{$B_k$ for $k=8,\ 12,\ 18,\ 27$ in the case $b=2$ and $H=0.7$. Strong convergence.}
\label{H=.7}
\end{center}
\end{figure}
\end{center}

\begin{center}
\begin{figure}
\begin{center}
{\includegraphics[width=10cm,height = 10cm]{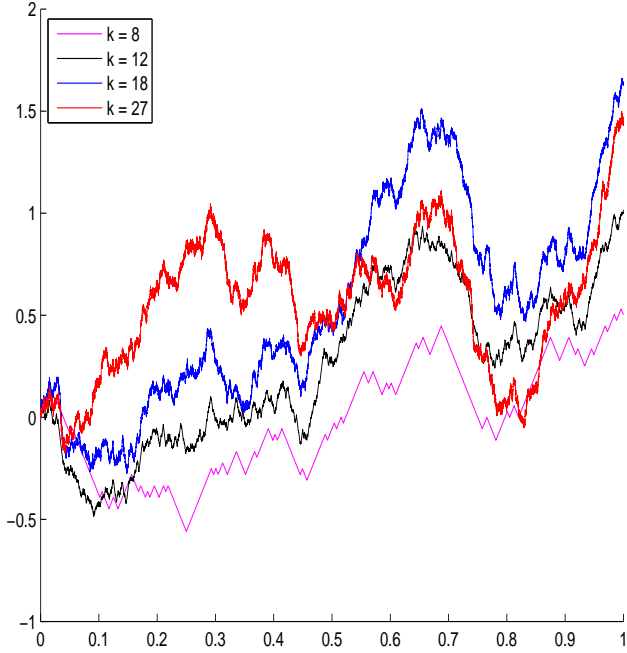}}
\vskip -1cm
\caption{$B_k/\sigma \sqrt{k}$ for $k=8,\ 12,\ 18,\ 27$ in the case  $b=2$ and $H=0.5$: Convergence in distribution to the Wierner Brownian motion.}
\label{H=.5}
\end{center}
\end{figure}
\end{center}
%

\begin{center}
\begin{figure}
\begin{center}
{\includegraphics[width=10cm,height = 10cm]{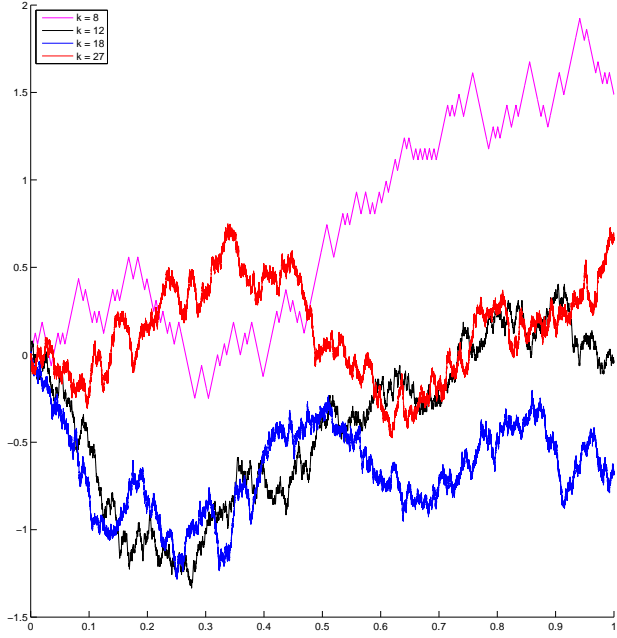}}
 \caption{$B_k/\sigma b^{k(1/2-H)}$ for $k=8,\ 12,\ 18,\ 27$ in the case $b=2$ and $H=-2$: Convergence in distribution to the Wierner Brownian motion.}
\label{H=-2}
\end{center}
\end{figure}
\end{center}

\section{Proof of Theorem~\ref{th2}}\label{sec2}

\label{strong}

\begin{lemma}
\label{lemmoments} The martingale $\big (Z_n=
B_{n}(1)\big )_{n\ge1}$ converges almost surely and in  $L^{q}$ norm
for all $q\ge1$.
\end{lemma}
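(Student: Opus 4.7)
The plan is to show that $(Z_n)_{n\ge 1}$ is a martingale bounded in $L^q$ for every $q\ge 1$. Almost sure convergence will then follow from Doob via the $L^2$ bound, and $L^q$ convergence from uniform integrability together with the a.s. limit.

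\emph{Martingale property and $L^2$ bound.} Using the integral representation of $B_n$ together with $\mathbb{E}(\epsilon)=2^{H-1}$, one readily checks that $\mathbb{E}(B_{n+1}(t)\mid\mathcal{F}_n)=B_n(t)$ for every $t\in[0,1]$, so $(Z_n)$ is a martingale with $\mathbb{E}(Z_n)=Z_0=1$. Squaring the identity (\ref{fonc2}), taking expectations, and using $\epsilon(i)^2=1$, independence, $\mathbb{E}(\epsilon)=2^{H-1}$ and $\mathbb{E}(Z_{n-1})=1$, one obtains
$$
\mathbb{E}(Z_n^2) = 2^{1-2H}\,\mathbb{E}(Z_{n-1}^2) + \tfrac12.
$$
Since $H>1/2$ forces $2^{1-2H}<1$, $\mathbb{E}(Z_n^2)$ is bounded and its limit equals $(2-2^{2-2H})^{-1}=\sigma_H^2$, incidentally justifying the formula announced before Theorem~\ref{newTCL1}. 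Being an $L^2$-bounded martingale, $(Z_n)$ converges almost surely and in $L^2$.

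\emph{Higher moments by induction on even order.} I would then prove by induction on $p\ge 1$ that $\sup_n \mathbb{E}(Z_n^{2p})<\infty$. The case $p=1$ has just been done. For $p\ge 2$, expand $Z_n^{2p}$ binomially via (\ref{fonc2}) and take expectation, using independence of the four factors on the right-hand side and $\mathbb{E}(\epsilon^j)\in\{1,2^{H-1}\}$ according to the parity of $j$. The two extreme terms $k=0,2p$ contribute $2^{1-2pH}\,\mathbb{E}(Z_{n-1}^{2p})$, with $2^{1-2pH}<1$ since $H>1/2$. Among the remaining $2p-1$ terms, those with $2\le k\le 2p-2$ involve only moments of $|Z_{n-1}|$ of order $\le 2p-2$, which are bounded by the induction hypothesis combined with Lyapunov's inequality. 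The two terms $k=1$ and $k=2p-1$ involve $\mathbb{E}(Z_{n-1}^{2p-1})$, which Lyapunov dominates by $\mathbb{E}(Z_{n-1}^{2p})^{(2p-1)/(2p)}$; Young's inequality then lets us absorb this quantity into the leading term with an arbitrarily small extra coefficient $\delta>0$. Choosing $\delta$ so that $\alpha:=2^{1-2pH}+\delta<1$, we arrive at
$$
\mathbb{E}(Z_n^{2p}) \le \alpha\,\mathbb{E}(Z_{n-1}^{2p}) + C_p,
$$
whence $\sup_n \mathbb{E}(Z_n^{2p})<\infty$, and by Lyapunov once more, $\sup_n \mathbb{E}(|Z_n|^q)<\infty$ for every $q\ge 1$. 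Combined with the already established a.s. convergence, uniform integrability of $(|Z_n|^q)_{n\ge 1}$ upgrades it to convergence in $L^q$.

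The only non-routine point is the treatment of the mixed terms involving the odd moment $\mathbb{E}(Z_{n-1}^{2p-1})$, which is not directly controlled by the induction hypothesis on even moments; the Young's inequality trick sacrifices an arbitrarily small fraction of the contracting leading term in order to absorb it, keeping the overall recursion coefficient strictly below $1$. Everything else is a routine bookkeeping of finitely many binomially-expanded mixed terms.
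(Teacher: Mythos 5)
Your proof is correct, and its core is the same as the paper's: raise the recursion (\ref{fonc2}) to the $q$-th power, take expectations using the independence structure, and exploit that the leading coefficient $2^{1-qH}\mathbb{E}(\epsilon^q)$ is strictly less than $1$ for $q\ge 2$ when $H>1/2$. The difference is in the induction bookkeeping. The paper runs a single induction over \emph{all} integer orders $q$ (odd and even alike), so that in the recursion (\ref{Mq}) every cross term $\mathbb{E}(Z_n^k)\mathbb{E}(Z_n^{q-k})$ with $1\le k\le q-1$ is already known to converge by the induction hypothesis; this makes the recursion of the form $u_{n+1}=a\,u_n+b_n$ with $a<1$ and $b_n$ convergent, and yields the stronger conclusion that each moment $\mathbb{E}(Z_n^q)$ actually \emph{converges}, a fact the paper reuses later (in Lemma~\ref{ident} and Proposition~\ref{ident2}, where the limiting moments are identified via the same recursion). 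By restricting your induction to even orders you leave the odd moment $\mathbb{E}(Z_{n-1}^{2p-1})$ uncontrolled and must patch it with Lyapunov's inequality followed by a Young-type absorption into the contracting term; this is valid and gives boundedness in every $L^q$, which suffices for the lemma, but the extra device is unnecessary if you simply include the odd orders in the induction. Both routes are sound; the paper's is slightly cleaner and carries more information forward.
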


\begin{proof}
For every integer $q\ge 1$, raising (\ref{fonc2}) to the power $q$ yields
\begin{equation}  \label{Mq}
\mathbb{E}(Z_{n+1}^q)=2^{1-qH}\mathbb{E}(\epsilon^q)\mathbb{E}
(Z_{n}^q)+2^{-Hq}\sum_{k=1}^{q-1}\binom{q}{k}\mathbb{E}(\epsilon^k)\mathbb{E}(\epsilon^{q-k})\mathbb{E}(Z_n^k)\mathbb{E}(Z_n^{q-k}).
\end{equation}
Moreover, since $H>1/2$ we have $0<2^{1-qH}\mathbb{E}(\epsilon^q)<1$ for all integers $q\ge 2$ ($\mathbb{E}(\epsilon^q)$ is equal to $2^{H-1}$ if $q$ is odd and 1 otherwise). Consequently, since $\mathbb{E}(Z_n)=1$ for all $n\ge 1$, induction on $q\in\mathbb{N}^*$ using (\ref{Mq}) shows that
the sequence $\mathbb{E}(Z_n^q)$ converges as $n$ tends to $\infty$ for every integer $q\ge 1$. This
implies that the martingale $(Z_n)_{n\ge 1}$ is bounded in $L^q$
norm for all $q\ge 1$, hence the result.
\end{proof}

\begin{lemma}\label{uniform}
Let $\alpha\in (0,H)$. With probability 1, there exists an integer $p_0\ge 1$ such that
$$
\forall\ p\ge p_0,\ \sup_{0\le k\le 2^{p}-1}\sup_{n\ge 1}\big | B_{n}\big ((k+1)2^{-p}\big )- B_{n}(k2^{-p})\big |\le 2^{-p\alpha}.
$$
\end{lemma}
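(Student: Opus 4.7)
The plan is to reduce the statement, via the self-similarity relation \eqref{self-sim}, to uniform control of the supremum of the martingale $(Z_m)_{m\ge 0}$, and then to combine Doob's $L^q$ maximal inequality with Lemma~\ref{lemmoments} and a Borel--Cantelli argument.

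First I would fix $p\ge 1$ and $k\in\{0,\ldots,2^p-1\}$, and let $w=w_1\cdots w_p$ be the unique word with $t_w=k2^{-p}$. For $n\ge p$, applying \eqref{self-sim} at $t=(k+1)2^{-p}$ (the right endpoint of $I_w$) and noting that $S_{w_p}^{-1}\circ\cdots\circ S_{w_1}^{-1}((k+1)2^{-p})=1$, one gets
$$
B_n\big((k+1)2^{-p}\big)-B_n(k2^{-p})=\boldsymbol{\epsilon}(w)\,2^{-pH}\,Z_{n-p}(w),
$$
so the absolute value is $2^{-pH}|Z_{n-p}(w)|$. For $n<p$, $B_n$ is affine on a dyadic interval of generation $n$ containing $I_w$ with slope of modulus $2^{n(1-H)}$, so the increment over $I_w$ has absolute value $2^{-p+n(1-H)}\le 2^{-pH}$ (using $H\le 1$). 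With the convention $Z_0(w)=1$, these two estimates yield
$$
\sup_{n\ge 1}\big|B_n\big((k+1)2^{-p}\big)-B_n(k2^{-p})\big|\le 2^{-pH}\sup_{m\ge 0}|Z_m(w)|.
$$

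Next I would bound $M_w:=\sup_{m\ge 0}|Z_m(w)|$ in $L^q$. Since $x\mapsto|x|^q$ is convex for $q\ge 1$, $(|Z_m|^q)_{m\ge 0}$ is a nonnegative submartingale, and Lemma~\ref{lemmoments} provides $\sup_m\mathbb{E}(|Z_m|^q)<\infty$ for every $q\ge 1$. Doob's $L^q$ maximal inequality (for $q>1$), applied to $|Z_m|$ and combined with monotone convergence, gives
$$
\mathbb{E}(M_w^q)\le \Big(\frac{q}{q-1}\Big)^q\sup_{m\ge 0}\mathbb{E}(|Z_m|^q)=:C_q<\infty,
$$
with the same constant $C_q$ for every $w$ since $Z_m(w)$ has the law of $Z_m$.

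Finally, by Markov's inequality and a union bound over the $2^p$ words of length $p$,
$$
\mathbb{P}\Big(\max_{w\in\{0,1\}^p}M_w>2^{p(H-\alpha)}\Big)\le 2^p\cdot\frac{C_q}{2^{pq(H-\alpha)}}=\frac{C_q}{2^{p(q(H-\alpha)-1)}}.
$$
Since $H-\alpha>0$, choosing $q>1/(H-\alpha)$ makes this summable in $p$, and Borel--Cantelli provides an almost surely finite $p_0$ such that, for all $p\ge p_0$, $\max_w M_w\le 2^{p(H-\alpha)}$; combined with the reduction above this gives the desired bound $2^{-p\alpha}$. The only delicate step is the self-similarity reduction leading to the inequality involving $\sup_m|Z_m(w)|$; once that is in place, the remaining moment and Borel--Cantelli estimates are routine and I do not expect a genuine obstacle.
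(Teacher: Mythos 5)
Your proof is correct and follows essentially the same route as the paper: reduce via the self-similarity relation \eqref{self-sim} and the deterministic bound for $n\le p$, control moments through Doob's maximal inequality and Lemma~\ref{lemmoments}, then conclude by Markov, a union bound over the $2^p$ dyadic intervals, and Borel--Cantelli. The only cosmetic difference is that you apply Doob to the single martingale $(Z_m(w))_{m\ge 0}$ after the self-similar reduction, whereas the paper applies it directly to each increment martingale $\big(B_n((k+1)2^{-p})-B_n(k2^{-p})\big)_{n\ge 1}$; the estimates are identical.
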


\begin{proof}

For every $p\ge 1$ and $0\le k\le 2^p-1$, by construction the sequence\\ $\left (\Delta B_n(p,k)= B_{n}\big ((k+1)2^{-p}\big )- B_{n}(k2^{-p})\right )_{n\ge 1}$ is a martingale, so Doob's inequality yields for every $q>1$ a constant $C_q>0$ such that
$$
\mathbb{E}\left (\sup_{n\ge 1}\big |\Delta B_n(p,k)\big |^q\right )\le C_q\sup_{n\ge 1}\mathbb{E}\left (\big |\Delta B_n(p,k)\big |^q\right ).
$$
On the one hand --- always by construction --- if $n\le p$, then
$\mathbb{E}\left (\big |\Delta B_n(p,k)\big |^q\right
)=2^{-qn(H-1)}2^{-qp}\le 2^{-qpH}$ . On the other hand,
(\ref{self-sim}) and Lemma~\ref{lemmoments} together yield a
constant $C'_q\ge 1$ such that $\mathbb{E}\left (\big |\Delta
B_n(p,k)\big |^q\right )\le C'_q 2^{-qpH}$ if $n>p$. Consequently,
for all $p\ge 1$,
$$
\mathbb{E}\left (\sup_{n\ge 1}\big |\Delta B_n(p,k)\big |^q\right )\le C_qC'_q2^{-qpH}.
$$
For $q>(H-\alpha)^{-1}$, the previous inequality implies
$$
\sum_{p\ge 1}\mathbb{P}\left (\exists \ 0\le k<2^p: \sup_{n\ge 1}\big |\Delta B_n(p,k)\big |>2^{-p\alpha}\right )<\infty.
$$
We conclude thanks to the Borel-Cantelli lemma.
\end{proof}
For $w\in T$ we define $Z(w)=\lim_{n\to\infty}Z_n(w)$, and we denote $Z$ by $Z(\emptyset)$.
\begin{lemma}
\label{fourier} Let $\varphi$ stand for the characteristic function of
$Z$. There exists $\rho\in(0,1)$ such that $\varphi
(t)=O\big (
\rho^{|t|^{1/H}}\big )\ \ (|t|\to\infty)$. Consequently, the probability
distribution of $Z$ possesses an infinitely differentiable bounded density, and $\mathbb{E}(|Z|^{-\gamma})<\infty$ for all $\gamma\in (0,1)$.
\end{lemma}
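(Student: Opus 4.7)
The plan is to extract a self-similar inequality for $|\varphi|$ from the recursion (\ref{fonc2}) and then iterate it. Taking characteristic functions in (\ref{fonc2}) and using the independence of $\epsilon(0),\epsilon(1),Z_{n-1}(0),Z_{n-1}(1)$ gives
\[
\varphi(t)=\bigl[p^{+}\varphi(2^{-H}t)+p^{-}\varphi(-2^{-H}t)\bigr]^{2},
\]
so, since $|\varphi(-s)|=|\varphi(s)|$ and $p^{+}+p^{-}=1$,
\[
|\varphi(t)|\le |\varphi(2^{-H}t)|^{2}\qquad(t\in\mathbb{R}).
\]
Iterating this at once yields $|\varphi(t)|\le |\varphi(2^{-nH}t)|^{2^{n}}$ for every $n\ge 0$.

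To turn this into genuine decay, I need $|\varphi|$ bounded away from $1$ on one multiplicative window $[a,2^{H}a]$. Lemma~\ref{lemmoments} ensures $\mathbb{E}(Z^{2})<\infty$, and a direct use of (\ref{Mq}) at $q=2$ gives $\mathbb{E}(Z^{2})=1/(2-2^{2-2H})$, whence $\mathrm{Var}(Z)=1/(2-2^{2-2H})-1>0$ for $H\in(1/2,1)$. The Taylor expansion
\[
|\varphi(u)|^{2}=1-\mathrm{Var}(Z)\,u^{2}+o(u^{2})\quad(u\to 0)
\]
then provides $\delta>0$ with $|\varphi(u)|^{2}\le 1-\tfrac12\mathrm{Var}(Z)\,u^{2}$ for $|u|\le\delta$. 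Picking $a\in(0,2^{-H}\delta]$ I get $\rho_{0}:=\sup_{u\in[a,2^{H}a]}|\varphi(u)|<1$. For $|t|\ge a$, let $n\ge 0$ be the unique integer with $2^{-nH}|t|\in[a,2^{H}a)$; then $2^{n}\ge c\,|t|^{1/H}$ for a constant $c=c(a,H)>0$, and
\[
|\varphi(t)|\le |\varphi(2^{-nH}t)|^{2^{n}}\le \rho_{0}^{\,2^{n}}\le \rho^{\,|t|^{1/H}}
\]
with $\rho:=\rho_{0}^{\,c}\in(0,1)$. For $|t|\le a$ the inequality $|\varphi(t)|\le 1$ is already dominated by $\rho^{\,a^{1/H}}\rho^{-a^{1/H}}$, so the stated $O$-bound holds on all of $\mathbb{R}$.

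The two consequences follow immediately from this super-exponential decay. First, $t\mapsto t^{k}\varphi(t)$ is integrable for every $k\ge 0$, so Fourier inversion
\[
f(x)=\frac{1}{2\pi}\int_{\mathbb{R}}e^{-itx}\varphi(t)\,dt
\]
defines a density for $Z$; differentiation under the integral sign (justified by the decay) yields $f\in C^{\infty}(\mathbb{R})$ with $\|f^{(k)}\|_{\infty}<\infty$ for every $k$. In particular $M:=\|f\|_{\infty}<\infty$, so for $\gamma\in(0,1)$,
\[
\mathbb{E}(|Z|^{-\gamma})=\int_{\mathbb{R}}|z|^{-\gamma}f(z)\,dz\le M\int_{|z|\le 1}|z|^{-\gamma}\,dz+\int_{|z|>1}f(z)\,dz<\infty.
\]

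The only subtle step is producing $|\varphi|<1$ on an entire scaling window $[a,2^{H}a]$; without it the iteration $|\varphi(t)|\le|\varphi(2^{-nH}t)|^{2^{n}}$ cannot be closed. Strict positivity of $\mathrm{Var}(Z)$ combined with continuity of $\varphi$ is exactly what unlocks this, which is precisely why the $L^{2}$-boundedness obtained in Lemma~\ref{lemmoments} is invoked before any Fourier analysis begins.
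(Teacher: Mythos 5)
Your proof is correct and follows essentially the same route as the paper: establish that $|\varphi|$ is bounded away from $1$ on a multiplicative window $[a,2^{H}a]$ and then iterate the squaring relation coming from the functional equation to get doubly exponential decay along the annuli $[2^{kH}a,2^{(k+1)H}a]$. The only (minor) difference is that you obtain the window bound quantitatively from $\mathrm{Var}(Z)=\mathbb{E}(Z^2)-1=(2-2^{2-2H})^{-1}-1>0$ via a Taylor expansion of $|\varphi|^2$ at $0$, whereas the paper argues softly that $Z$ is non-degenerate and invokes continuity and compactness; both arguments are valid for $H\in(1/2,1)$.
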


\begin{proof}
The case $H=1$ is obvious. Suppose that $H\in (1/2,1)$. The probability distribution of
$Z$ cannot be a Dirac mass, because $\mathbb{E}(Z)=1$ and
\begin{equation}\label{FONC}
Z=2^{-H}\epsilon (0)Z(0)+2^{-H}\epsilon (1)Z(1),
\end{equation}
with the same independence and
 equidistribution properties as in (\ref{fonc2}). So there
exists $\alpha>0$ and $\gamma<1$ such that $\sup_{t, |t|\in
[\alpha,2^{H}\alpha]}|\varphi(t)|\le\gamma$. Now, using the fact that
\[
\varphi(t)=\left[ p_{H}^{+}\varphi\big (2^{-H}t\big )+p_{H}^{-}
\varphi\big (-2^{-H}t\big )\right] ^{2},
\]
we obtain by induction that $\displaystyle \sup_{t,\ |t|\in[2^{kH}\alpha,2^{(k+1)H] }\alpha]}|\varphi(t)|\le\gamma
^{2^{k}}\quad(\forall\ k\ge0)$. Since $|t|^{1/H}\le 2\alpha^{1/H} 2^{k}$ for $|t|\in[2^{kH}\alpha
,2^{(k+1)H}\alpha]$, the conclusion follows with $\rho=\gamma^{1/
2\alpha^{1/H}}$.

The rate of decay of $\varphi$ at $\infty$ yields the conclusion regarding the
probability distribution of $Z$ and the moments of $|Z|^{-1}$.
\end{proof}

\noindent {\it Proof of Theorem~\ref{th2}: the convergence
properties of $( B_{n})_{n\ge 1}$ and the global H\"older continuity
of the limit process.}

Let $\alpha\in (0,H)$. It follows from Lemma~\ref{uniform} that with probability 1, there exists $\delta>0$ and $C>0$ such that for all $(t,s)\in [0,1]^2$ such that $|t-s|\le \delta$ we have $\sup_{n\ge 1}| B_{n}(t)- B_{n}(s)|\le C |t-s|^{\alpha}$ (see for instance the proof of the Kolmogorov-Centsov theorem
in \cite{KS}). Since the sequence $( B_{n})_{n\ge 1}$ converges almost surely on the set of dyadic numbers of $[0,1]$ which is dense in $[0,1]$, this implies that, with probability 1, $( B_{n})_{n\ge 1}$ converges uniformly to a limit $ B $ which is $\alpha$-H\"older continuous. To see that the convergence holds in $L^q$ norm for all $q\ge 1$, it is enough to show that the sequence $\big (\mathbb{E}(\sup_{1\le p\le n}\| B_{p}\|_\infty^q)\big)_{n\ge 1}$ is bounded for all integer $q\ge 2$. We show that it is true for $q=2$ and leave the reader verify by induction that it is true for $q\ge 2$. For $n\ge 1$, define
\begin{equation*}
\widetilde Z_n=\sup_{1\le p\le n}\| B_{p}\|_\infty, \mbox{ and } \widetilde Z_{n}(k)=
\sup_{1\le p\le n}\| B_{p}(k)\|_\infty,\ k\in\{0,1\}.
\end{equation*}
Due to (\ref{self-sim}) we have for $n\ge 2$
\begin{equation*}
\widetilde Z_n\le \max\Big(2^{-H}\widetilde Z_{n-1}(0), 2^{-H}\widetilde Z_{n-1}(1)+\sup_{1\le p\le n}| B_{p}(1/2)|\Big ).
\end{equation*}
Thus, if we denote $\sup_{1\le p\le n}| B_{p}(1/2)|$ by $M_n$ we have
\begin{eqnarray*}
\mathbb{E}(\widetilde Z_n^2)\!\!\!&\le& \!\!\!\mathbb{E}\left (2^{-2H}\widetilde Z_{n-1}(0)^2+2^{-2H}\widetilde Z_{n-1}(1)^2+2\widetilde Z_{n-1}(1)M_n+M_n^2\right )
\\
\!\!\!&\le & \!\!\!2^{1-2H}\mathbb{E}(\widetilde Z_{n-1}^2)+2\mathbb{E}(\widetilde Z_{n-1}^2)^{1/2}
\left\|M_n\right \|_2+\| M_n\|_2^2.
\end{eqnarray*}
Lemma~\ref{lemmoments} shows that $(B_p(1/2))_{p\ge 1}$ is a
martingale bounded in $L^2$ norm, so $(\left\| M_n\right \|_2)_{n\ge
1}$ is bounded. Consequently, there exists $C>0$ such that
\begin{equation}  \label{infty}
\forall\ n\ge 1,\ \mathbb{E}(\widetilde Z_n^2)\le f\big (\mathbb{E}(\widetilde Z_{n-1}^2)\big ),\ \mbox{with } f(x)=
2^{1-2H}x+C\sqrt {x}+C.
\end{equation}
Since $2^{1-2H}<1$, there exists $x_0> 0$ such that $f(x)<x$ for all $x>x_0$
. This fact together with (\ref{infty}) yields $\mathbb{E}(\widetilde Z_n^2)\le
\max \left (x_0, f\big (\mathbb{E}(\widetilde Z_{1}^2)\big )\right )$ for all~$n\ge 2$.

\medskip

\noindent {\it Proof of Theorem~\ref{th2}: the properties 1., 2. and
3.}

1. This is an immediate consequence of (\ref{self-sim}).

2. The global H\"older regularity property has already been
established. To obtain the pointwise H\"older exponent we use an
approach similar to that used for the Brownian motion in \cite{Erdos} (see also \cite{KS}).

Fix $\varepsilon>0$ and let  $\mathcal{O}$ be the set of points 
$\omega\in \Omega$ such that $B_n$ converges uniforlmy as  $n\to\infty$ and the limit $ B $
possesses points at which the pointwise H\"older
exponent is at least $H+\varepsilon$. We show that $\mathcal{O}$ is included in a set of null
probability.

We fix an integer $K>4/\varepsilon$ and denote by $n_{K}$ the smallest integer
$n$ such that $K2^{-n}\le1$. For $t\in[0,1]$ and $n\ge n_{K}$ , consider
$S^{K}_{n}(t)$ a subset of $[0,1]$ consisting of $K+1$ consecutive dyadic numbers
of generation $n$ such that $t\in[\min\, S^{K}_{n}(t),\max\, S^{K}_{n}(t)]$.
Also denote by $\boldsymbol{S}^{K}_{n}(t)$ the set of $K$ consecutive dyadic
intervals delimited by the elements of $S^{K}_{n}(t)$. If the pointwise
H\"older exponent at $t$ is larger than or equal to $H+\varepsilon$ then for
$n$ large enough we have necessarily $\sup_{s\in S^{K}_{n}(t)}|B
(s)- B (t)|\le(K2^{-n})^{H+\varepsilon/2}$, so that $\sup
_{I\in\boldsymbol{S}^{K}_{n}(t)} |\Delta  B (I)|\le2 (K2^{-n}
)^{H+\varepsilon/2}$, where $\Delta  B (I)$ stands for the increment of $ B $ over $I$.

Now let $\boldsymbol{S}^{K}_{n}$ be the set consisting of all $K$-uple of
consecutive dyadic intervals of generation $n$, and if $S\in\boldsymbol{S}
^{K}_{n}$, denote the event $\big \{ \sup_{I\in S} |\Delta B
(I)|\le2 (K2^{-n})^{H+\varepsilon/2}\big \}$ by $E_{S}$. The previous
lines show that
\[
\mathcal{O}\subset\mathcal{O}^{\prime}=\bigcap_{n\ge n_{K}}\bigcup_{p\ge
n}\bigcup_{S\in\boldsymbol{S}^{K}_{p}}E_{S}.
\]
By construction, if $S\in\boldsymbol{S}^{K}_{p}$, $\big (|\Delta  B (I)|\big)_{I\in S}$ is equal to $( 2^{-pH}|Y_{I}|)_{I\in S}$, where the $K$
random variables $Y_{I}$ are mutually independent and identically distributed
with $ B (1)$. Consequently, $\mathbb{P}(E_{S})$ depends only on
$K$ and $p$ and
\begin{eqnarray*}
\mathbb{P}(E_{S})&\le&\left[  \mathbb{P}(| B (1)|\le2 K^H(K2^{-p}
)^{\varepsilon/2})\right] ^{K}\\
&\le& (2K^H)^{K/2} K^{K \varepsilon/4} 2^{-p K
\varepsilon/4}\left[ \mathbb{E}(| B (1)|^{-1/2})\right] ^{K},
\end{eqnarray*}
where $\mathbb{E}(| B (1)|^{-1/2})<\infty$ due to Lemma~\ref{fourier}. Since the cardinality of $\boldsymbol{S}^{K}_{p}$ is less than $2^{p}$, this
yields $\mathbb{P}\big(\bigcup_{S\in\boldsymbol{S}^{K}_{p}}E_{S}\big )=O(
2^{p} 2^{-p K \varepsilon/4})$. Our choice for
$K$ implies that the series $\sum_{p\ge n_K}\mathbb{P}\big(\bigcup_{S\in
\boldsymbol{S} ^{K}_{p}}E_{S}\big )$ converges, hence $\mathbb{P}(\mathcal{O}^{\prime})=0$.

\medskip

3. Let us introduce additional notations. If
$w\in\Sigma^{*}$ and $J=I_w$ then we define
$\boldsymbol{\epsilon}(J):=\boldsymbol{\epsilon}(w)=\prod_{k=1}^{|w|}\epsilon(w_1\cdots w_k)$. We denote by $\Gamma$ the graph $\left\{ \big (t,B(t)\big ):
t\in[0,1]\right\} $ of $B$. We recall that the Hausdorff dimension of a subset of $\mathbb{R}^2$ is always smaller than of equal to its box dimension.

\smallskip

At first, since $B$ is $\alpha$-H\"older continuous for all $\alpha<H$, $2-H$ is an upper bound for the box dimension of $\Gamma$ (see \cite{Falc} Ch. 11).

To find the sharp lower bound $2-H$ for the Hausdorff dimension of $\Gamma$ we show that, with probability 1, the
measure on this graph obtained as the image of the Lebesgue measure restricted
to $[0,1]$ by the mapping $t\mapsto\big (t,B(t)\big )$ has a finite energy
with respect to the Riesz Kernel $u\in\mathbb{R} ^{2}\setminus\{0\}\mapsto
\|u\|^{-\gamma}$ for all $\gamma<2-H$ (see \cite{Falc} Ch. 4.3 and 11 for 
details about this kind of approach). This property holds if we show that for all $\gamma<2-H$ we have
\[
\int_{[0,1]^{2}}\mathbb{E}\left( \big (|t-s|^{2}+|B
(t)-B(s)|^{2}\big )^{-\gamma/2}\right) \ \text{d}t\text{d}s <\infty.
\]
If $I$ is a closed subinterval of $[0,1]$, we denote by $\mathcal{G}(I)$ the
set of closed dyadic intervals of maximal length included in $I$, and then
$m_{I}=\min\bigcup_{J\in\mathcal{G}(I)}J$ and $M_{I} =\max\bigcup
_{J\in\mathcal{G }(I)}J$.

Let $0<s<t<1$ be two non dyadic numbers. We define two sequences
$(s_{p})_{p\ge0}$ and $(t_{p})_{p\ge0}$ as follows. Let $s_{0}=m_{[s,t]}$ and
$t_{0}=M_{[s,t]}$. Then let define inductively $(s_{p})_{p\ge1}$ and
$(t_{p})_{p\ge1}$ as follows: $s_{p}=m_{[s,s_{p-1}]}$ and $t_{p}
=M_{[t_{p-1},t]}$. Let us denote by $\mathcal{C}$ the collection of intervals
consisting of $[s_{0},t_{0}]$ and all the intervals $[s_{p},s_{p-1}]$ and
$[t_{p-1},t_{p}]$, $p\ge1$. Every interval $I\in\mathcal{C}$ is the union of
at most two intervals of the same generation $n_{I}$, the elements of
$\mathcal{G}(I)$, and
$$
\Delta  B (I)=\sum_{J\in\mathcal{G}(I)}\Delta  B (J)=\sum_{J\in\mathcal{G}(I)}
\boldsymbol{\epsilon} (J)2^{-n_{I}H}Y_{J},$$ where $ \Delta  B (J)$ and $Y(J)$ have been introduced in the discussion regarding the pointwise exponents. By construction, we have $\min_{I\in\mathcal{C} }n_{I}=n_{[s_{0},t_{0}]}$ and
$(t-s)/3\le 2^{-n_{[s_{0},t_{0}]}}\le(t-s)$. Also, all the random variables $Y_{J}$ are mutually independent and
independent of $\mathcal{T}_{\mathcal{C}}=\sigma(\boldsymbol{\epsilon}
(J):J\in\mathcal{G}(I),\ I\in\mathcal{C})$. Now, we write
$$
B(t)-B(s)=2^{-n_{[s_{0},t_{0}]}H}\Big( \sum_{J\in\mathcal{G}
([s_{0},t_{0}])} \boldsymbol{\epsilon} (J)Y_{J}+ Z(s,s_{0})+Z(t_{0},t)\Big)
,
$$
where
$$
\begin{cases}
\displaystyle Z(s,s_{0})=\lim_{p\to\infty} \sum_{0\le k\le p}2^{(n_{[s_{0}
,t_{0}]}-n_{[s_{k+1},s_{k}]})H} \sum_{J\in\mathcal{G} ([s_{k+1},s_{k}])}
\boldsymbol{\epsilon} (J)Y_{J}\\
\displaystyle Z(t_{0},t)=\lim_{p\to\infty} \sum_{0\le k\le p}2^{(n_{[s_{0}
,t_{0}]}-n_{[t_{k}, t_{k+1}]})H} \sum_{J\in\mathcal{G}([t_{k}, t_{k+1}])}
\boldsymbol{\epsilon} (J)Y_{J}.
\end{cases}
$$
Let $\mathcal{Z}(t,s)=\sum_{J\in\mathcal{G}([s_{0},t_{0}])}
\boldsymbol{\epsilon} (J)Y_{J}+ Z(s,s_{0})+Z(t_{0},t)$ and fix $J_{0}%
\in\mathcal{G}([s_{0},t_{0}])$. Conditionally on $\mathcal{T}_{\mathcal{C}}$,
$\mathcal{Z}(t,s)$ is the sum of $\pm Y(J_{0})$ plus a random variable $U$
independent of $Y(J_{0})$. Consequently, the probability distribution of
$\mathcal{Z}(t,s)$ conditionally on $\mathcal{T}_{\mathcal{C}}$ possesses a
density $f_{t,s}$ and $\|\widehat{f_{t,s}}\|_{L^{1}}\le\|\varphi\|_{L^{1}}$,
where $\varphi$ is the characteristic function of $Y(J_{0})$ studied in
Lemma~\ref{fourier}.

Thus, for $\gamma<2-H$ we have%

\begin{align*}
\mathbb{E}\left( \big (|t-s|^{2}+|B
(t)-B(s)|^{2}\big )^{-\gamma/2}|\mathcal{T }_{\mathcal{C}}\right)  & = \int_{\mathbb{R}}\frac{f_{t,s}%
(u)}{\big ( |t-s|^{2}+2^{-2n_{[s_{0},t_{0}]}H}u^{2}\big) ^{\gamma/2}}\, \text{d}u\\
& \le \int_{\mathbb{R}}\frac{f_{t,s}(u)}{\big ( |t-s|^{2}+3^{-2H}%
(t-s)^{2H}u^{2}\big )^{\gamma/2}}\, \text{d}u\\
& =|t-s|^{1-H-\gamma} \int_{\mathbb{R}}\frac{f_{t,s}(|t-s|^{1-H}v)}{\big (
1+3^{-2H}v^{2}\big )^{\gamma/2}}\, \text{d}v.
\end{align*}

The function $f_{t,s}$ is bounded independently of $t,\ s$ and $\mathcal{T}%
_{\mathcal{C}}$ since it is bounded by $\|\widehat{f_{t,s}}\|_{L^{1}}$ and we
just saw that this number is bounded by $\|\varphi\|_{L^{1}}$. Thus,
\[
\mathbb{E}\left(  \big (|t-s|^{2}+|B
(t)-B(s)|^{2}\big )^{-\gamma/2}\right) \le\|\varphi\|_{L^{1}}|t-s|^{1-H-\gamma}\int_{\mathbb{R}}\frac
{\text{d}v}{\big (1+3^{-2H}v^{2}\big )^{\gamma/2}} .
\]
This yields the conclusion. Notice that the fact that the distribution of the increment of $B$ over $[0,1]$, namely $Z$, has a density plays a crucial role in this proof, as the same kind of property is a powerful tool in finding a lower bound for the Hausdorff dimension of the graphs of fractional Brownian motions, symmetric L\'evy processes of index $\alpha\in (1,2)$ and certain Weierstrass functions with random phases (see \cite{Falc,Hunt}).

\section{Proof of Theorem~\ref{newTCL}}\label{sec3}
The case $H=-\infty$ has been discussed in Remark \ref{rem1.3}. We fix $H\in (-\infty,1/2]$.
\begin{lemma}\label{ident}
\label{prop1} The sequence $(X_{n}(1))_{n\ge1}$ converges in law to the standard normal distribution as $n$
tends to $\infty$.
\end{lemma}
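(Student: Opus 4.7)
The plan is to use the method of moments. Let $Y_n = X_n(1)$; since $Y_n$ is bounded (as a renormalized sum of $\pm 1$ weighted terms), all moments $m_{q,n} = \mathbb{E}(Y_n^q)$ exist. I will show that $m_{q,n} \to M_q$ for every $q \ge 1$, where $M_q$ is the $q$-th moment of the standard normal distribution ($M_q = 0$ for odd $q$ and $M_q = (q-1)!!$ for even $q$). Since the Gaussian law is determined by its moments, this suffices to conclude.

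Raising the functional equation (\ref{foncnorm}) to the power $q$ and using the independence of $\epsilon(0), \epsilon(1), Y_n(0), Y_n(1)$, I obtain
\begin{equation*}
m_{q,n+1} = \frac{c_n^q}{2^{q/2}} \sum_{k=0}^{q} \binom{q}{k}\, \mathbb{E}(\epsilon^k)\,\mathbb{E}(\epsilon^{q-k})\, m_{k,n}\, m_{q-k,n},
\end{equation*}
where $c_n = 1$ if $H<1/2$ and $c_n = \sqrt{n/(n+1)}$ if $H=1/2$. Isolating the two extreme terms $k=0$ and $k=q$ gives $m_{q,n+1} = \alpha_{q,n}\, m_{q,n} + \rho_{q,n}$ with $\alpha_{q,n} = 2^{1-q/2}\, c_n^q\, \mathbb{E}(\epsilon^q)$ and $\rho_{q,n}$ a polynomial expression in $m_{1,n}, \dots, m_{q-1,n}$.

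I then proceed by induction on $q$. The base case $q=1$ is immediate: $m_{1,n} = \mathbb{E}(Z_n)/(\sigma a_n) = 1/(\sigma a_n) \to 0$. For the inductive step with $q \ge 3$, the hypothesis yields $\rho_{q,n} \to \rho_q$, the polynomial expression with $M_k$ in place of $m_{k,n}$. The key identity is $\rho_q = (1-\alpha_q) M_q$, where $\alpha_q = \lim \alpha_{q,n} = 2^{1-q/2}\mathbb{E}(\epsilon^q)$; this expresses the fact that the standard normal $N$ satisfies $N \equiv (\eta_0 N_0 + \eta_1 N_1)/\sqrt{2}$ in distribution for any independent $\eta_0,\eta_1 \in \{\pm 1\}$ and independent standard normals $N_0, N_1$, which is trivial via characteristic functions since $\eta_i^2 = 1$ a.s. Because $\alpha_q \le 2^{1-q/2} < 1$ for $q \ge 3$ and $c_n^q \le 1$, the affine recursion contracts and forces $m_{q,n} \to \rho_q/(1-\alpha_q) = M_q$.

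The delicate case is $q = 2$, when the contraction argument breaks down ($\alpha_{2,n} = 1$ for $H<1/2$, and $\alpha_{2,n} = n/(n+1) \to 1$ for $H=1/2$). I treat it by direct computation. For $H<1/2$, $\rho_{2,n} = \mathbb{E}(\epsilon)^2 m_{1,n}^2$ is a summable geometric sequence, and the telescoping sum $m_{2,n} = m_{2,0} + \sum_{k<n} \rho_{2,k}$ converges; the specific value of $\sigma$ is precisely what makes the limit equal to $M_2 = 1$. For $H = 1/2$, the recursion rewrites as $(n+1)\,m_{2,n+1} = n\, m_{2,n} + 1$, which integrates to $m_{2,n} = 1 + O(1/n)$. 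The main obstacle of the proof is precisely this combination: establishing the algebraic identity $\rho_q = (1-\alpha_q) M_q$ that singles out the Gaussian as the only possible limit, together with the separate handling of $q=2$ where contraction is absent.
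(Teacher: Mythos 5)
Your proposal is correct and follows essentially the same route as the paper: the method of moments applied to the recursion obtained by raising (\ref{foncnorm}) to the $q$-th power, a contraction argument for $q\ge 3$ (using $2^{1-q/2}\mathbb{E}(\epsilon^q)<1$), a separate explicit treatment of $q=2$ where the recursion is only asymptotically contracting, and identification of the limit moments through the fact that the standard normal is a fixed point of the same averaging equation. The only cosmetic differences are that the paper computes $\mathbb{E}(Z_n^2)$ in closed form before normalizing rather than telescoping the normalized recursion, and verifies the even-moment relation (\ref{momnorm}) for the Gaussian by writing $N$ as a sum of two independent $\mathcal{N}(0,2^{-1/2})$ variables rather than via your identity $\rho_q=(1-\alpha_q)M_q$.
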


\begin{proof}
Let $u_0=\mathbb{E}(Z_{0}^{2})=1$. By definition, we have $u_0=1$. Let $\ell$ be the solution of $\ell=2^{1-2H}\ell +\frac{1}{2}$ when $H<1/2$, i.e. $\ell=(2-2^{2-2H})^{-1}$. Taking successively the square and the expectation in (\ref{fonc2}) yields $\displaystyle \mathbb{E}(Z_{n}^{2}) =2^{1-2H}\mathbb{E}(Z_{n-1}^{2})+\frac{1}{2}$ for $n\ge 1$. Consequently, $\mathbb{E}(Z_{n}^{2})=\ell+2^{n(1-2H)}(u_0-\ell)$ if $H>1/2$ and $\mathbb{E}(Z_{n}^{2})=u_0+n/2$ if $H=1/2$. This yields $\mathbb{E}(Z_n^2)\sim \frac{2^{2-2H}-1}{2^{2-2H}-2}2^{n(1-2H)}=\sigma^2 2^{n(1-2H)}$ if $H<1/2$ and $\mathbb{E}(Z_n^2)\sim n/2=\sigma^2n$ if $H=1/2$.  This is why we consider the normalized processes~$X_{n}$.

For $n\ge 1$ and $q\ge 1$ let $M_n^{(q)}=\mathbb{E}(X_{n}(1)^q)$. We are going to prove by induction and by using (\ref{fonc2}) that
\begin{enumerate}
\item for every $p\ge0$ one has the property $(\mathcal{P}_{2p})$:
$M^{(2p)}=\lim_{n\to\infty} M_n^{(2p)}$ exists; moreover $M^{(2)}=1$;

\item for every $p\ge0$ one has the property $(\mathcal{P}_{2p+1})$:
$\lim_{n\to\infty} M_n^{(2p+1)}=0$;

\item the sequence $(M^{(2p)})_{p\ge 1}$ obeys the following induction relation valid
for $p\ge2$:
\begin{equation}\label{momnorm}
M^{(2p)}=\big (2^{p}-2\big )^{-1}\sum_{k=1}^{p-1}\binom{2p}{2k} M^{(2k)}M^{(2p-2k)}.
\end{equation}
\end{enumerate}

Suppose that these properties have been established. Then, 1. insures
that the probability distributions of the $X_n(1)$ form a tight
sequence. Moreover, it is easy to verify that a $\mathcal{N }(0,1)$
random variable $N$ has the property that its
moments of even orders satisfy the same relation as the
numbers $M_{2p}$, $p\ge1$, defined by $M_{2}=1$ and the induction
relation 3. To see this, write $N$ as the sum of two
independent $\mathcal{N}(0,2^{-1/2})$ random variables.
Consequently, since the law $\mathcal{N}(0,1)$ is characterized by
its moments, 1., 2. and 3. imply that $X_n(1)$ converges in law to
$\mathcal{N}(0,1)$.

\smallskip

Now we prove 1., 2., and 3.. By construction, we have $M_n^{(1)}\sim 1/(\mathbb{E}(Z_n^2))^{1/2}$ hence $\lim_{n\to\infty} M_n^{(1)}=0$, as well as $\lim_{n\to\infty}M_n^{(2)}=1$. Consequently, $(\mathcal{P}_1)$ and $(\mathcal{P}_2)$~hold.

\smallskip

Let $q$ be an integer $\ge 3$. Raising
(\ref{foncnorm}) to the power $q$ yields

\begin{equation}  \label{H0}
M_{n+1}^{(q)}= \displaystyle r_n^{q}\left
(2^{1-q/2}\mathbb{E}(\epsilon^q)\mathbb{E}
(Z_{n}^q)+2^{-q/2}S(q,n)\right ),
\end{equation}
where $r_n=\displaystyle\sqrt{\frac{n
}{n +1}}$ if $H=1/2$ and $r_n=1$ otherwise, and
$$
\displaystyle S(q,n)=\sum_{k=1}^{q-1}\binom{q}{k}\mathbb{E}(\epsilon^k)\mathbb{E}(\epsilon^{q-k}) M_n^{(k)}M_n^{(q-k)}.
$$
Since $\mathbb{E}(\epsilon_0^q)=2^{H-1}$ or $1$ according to $q$ is
odd or even, (\ref{H0}) yields

\begin{equation}  \label{H}
M_{n+1}^{(q)}=
\begin{cases}
\displaystyle r_n^{q}\left
(2^{H-q/2}M_n^{(q)}+2^{-q/2}S(q,n)\right )
& \mbox{if $q$ is odd}, \\
\displaystyle r_n^{q}\left
(2^{1-q/2}M_n^{(q)}+2^{-q/2}S(q,n)\right )
& \mbox{if $q$ is even}
\end{cases}.
\end{equation}
Let us show by induction that $\big
((\mathcal{P}_{2p-1}),(\mathcal{P}_{2p}) \big )$ holds for $p\ge 1$, as well as (\ref{momnorm}).

We have already shown that $\big
((\mathcal{P}_{1}),(\mathcal{P}_{2})\big )$ holds. Suppose that
$\big ((\mathcal{P}_{2k-1}),(\mathcal{P}_{2k})\big )$ holds for
$1\le k\le p-1$, with $p\ge 2$. In particular, $M_n^{(k)}$ tends to
0 as $n$ tend to $\infty$ if $k$ is an odd integer belonging to $[1,
2p-3]$. Consequently, $S(2p-1,n)$ tends to 0 as $n$ tends to
$\infty$; indeed, for each integer $k$ between $1$
and $2p-1$, either $k$ or $2p-1-k$ is an odd number. The sequence
$(r_n)_{n\ge 1}$ being bounded, it follows from this property and
(\ref{H}) that $\displaystyle 
M_{n+1}^{(2p-1)}=r_n^{2p-1}2^{H+1/2-p}M_n^{(2p-1)}+o(1)$ as $n\to\infty$. Since $r_n^{2p-1}2^{H+1/2-p}\le 2^{1-p}<1$, this yields $\lim_{n\to\infty}M_n^{(2p-1)}=0$, that is to say $(\mathcal{P}_{2p-1})$.

Also, our induction's assumption implies that in the right hand side of $
M_{n+1}^{(2p)}$, the term $S(2p,n)$ tends to
$L=\sum_{k=1}^{p-1}\binom{2p}{2k}M^{(2k)}M^{(2p-2k)}$
as $n$ tends to $\infty$. Define $
L'=(2^p-2)^{-1} L$. By using (\ref{H}) we deduce from the previous
lines that
\begin{eqnarray*}
M_{n+1}^{(2p)}=
\begin{cases}
r_n^{2p}2^{1-p}M_n^{(2p)}+2^{-p}L+o(1)&\text{if } H=1/2\\
2^{1-p}M_n^{(2p)}+2^{-p}L+o(1)&\text{if } H<1/2
\end{cases}.
\end{eqnarray*}
Since $r_n\to 1$ as $n\to\infty$ when $H=1/2$ and $L'=
2^{1-p}L'+2^{-p} L$ we obtain
$$
M_{n+1}^{(2p)}-L'=
\begin{cases}
r_n^{2p}2^{1-p}(M_n^{(2p)}-L')+o(1)&\text{if } H=1/2\\
2^{1-p}(M_n^{(2p)}-L')+o(1)&\text{if } H<1/2
\end{cases}.
$$
This yields both $(\mathcal{P}_{2p})$ and (\ref{momnorm}) since  $r_n\le 1$ and $2^{1-p}<1$.
\end{proof}

\begin{lemma}\label{tight}
The laws of the random continuous functions $X_n$, $n\ge 1$, form a tight family in the set of probability measures on $\mathcal{C}([0,1])$.
\end{lemma}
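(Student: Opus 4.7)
The strategy is to verify Kolmogorov's tightness criterion on $\mathcal{C}([0,1])$: since $X_n(0)=0$ for every $n$, it suffices to establish the moment estimate
\begin{equation*}
\mathbb{E}\big|X_n(t)-X_n(s)\big|^{2q}\le C_q\,|t-s|^q,\qquad \forall\, n\ge 1,\ s,t\in[0,1],
\end{equation*}
for some integer $q\ge 2$, since the right-hand side then has the form $|t-s|^{1+(q-1)}$ with $q-1\ge 1$.

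The key input is a uniform moment bound on the endpoint. The inductive proof of Lemma~\ref{ident} shows that $M_n^{(2q)}=\mathbb{E}(X_n(1)^{2q})$ converges to a finite limit for every integer $q\ge 1$, hence is uniformly bounded in~$n$. In terms of $Z_n=B_n(1)$, this reads $\mathbb{E}(Z_n^{2q})\le C_q\sigma^{2q}\,2^{nq(1-2H)}$ when $H<1/2$ and $\mathbb{E}(Z_n^{2q})\le C_q\sigma^{2q}\,n^q$ when $H=1/2$. Combined with the scaling relation~(\ref{self-sim}) applied to the endpoints of a generation-$p$ dyadic interval $I_w$ with $p\le n$, which yields
\begin{equation*}
X_n\big((k+1)2^{-p}\big)-X_n(k2^{-p})=\frac{\boldsymbol{\epsilon}(w)\,2^{-pH}\,Z_{n-p}(w)}{\sigma\,2^{n(1/2-H)}}
\end{equation*}
(with $\sigma\sqrt n$ replacing $\sigma\, 2^{n(1/2-H)}$ when $H=1/2$), this produces the uniform dyadic-increment estimate $\mathbb{E}|X_n((k+1)2^{-p})-X_n(k2^{-p})|^{2q}\le C_q 2^{-pq}$ for every $0\le k<2^p$ and $p\le n$. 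For sub-dyadic scales, namely $s,t$ lying in a common dyadic interval of generation $n$, the piecewise linearity of $X_n$ with slope $\pm 2^{n/2}/\sigma$ (times $1/\sqrt n$ when $H=1/2$) gives the deterministic bound $|X_n(t)-X_n(s)|^{2q}\le\sigma^{-2q}|t-s|^q$, the factor $2^{nq}$ being absorbed by $|t-s|^{2q}\le 2^{-nq}|t-s|^q$.

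For arbitrary $0\le s<t\le 1$, I would invoke the standard greedy dyadic decomposition of $[s,t]$ into disjoint dyadic intervals of generations $p_i\ge\lceil\log_2(1/(t-s))\rceil$ complemented by at most two sub-dyadic end pieces of length $<2^{-n}$, with at most two pieces of each generation. Writing $X_n(t)-X_n(s)$ as the corresponding telescoping sum and applying Minkowski's inequality in $L^{2q}$, each dyadic piece contributes at most $C_q^{1/(2q)}2^{-p_i/2}$ and each sub-dyadic end piece contributes at most $\sigma^{-1}|t-s|^{1/2}$. The geometric sum $\sum_i 2^{-p_i/2}\le C\,(t-s)^{1/2}$ then yields $\|X_n(t)-X_n(s)\|_{L^{2q}}\le C_q'\,|t-s|^{1/2}$, whence $\mathbb{E}|X_n(t)-X_n(s)|^{2q}\le (C_q')^{2q}|t-s|^q$, and Kolmogorov's criterion with $q=2$ concludes. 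The main subtlety lies in the uniform-in-$n$ book-keeping of this chaining argument — ensuring that the dyadic and sub-dyadic regimes glue into a bound with a common $|t-s|^q$ scaling — while the extra $\sqrt n$ factor in the boundary case $H=1/2$ only improves the estimates.
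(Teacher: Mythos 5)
Your proof is correct, and its core coincides with the paper's: both arguments rest on the uniform moment bounds $\sup_{n\ge 1}\mathbb{E}(X_n(1)^{2K})<\infty$ extracted from the proof of Lemma~\ref{ident}, on the exact scaling identity for dyadic increments of generation $p\le n$ (the paper's (\ref{self-sim2})), and on the deterministic slope bound at sub-dyadic scales (the paper's (\ref{self-sim3})). Where you genuinely differ is in how these dyadic estimates are converted into tightness. The paper stays at the level of probabilities: it applies Markov's inequality to each dyadic increment, takes a union bound over all positions and all generations $j\ge p$, and on the resulting high-probability event performs the chaining pathwise to bound the modulus of continuity, concluding via Billingsley's Theorem 7.3. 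You instead perform the chaining in $L^{2q}$ via Minkowski's inequality along the greedy dyadic decomposition of $[s,t]$ to obtain the uniform Kolmogorov-type estimate $\mathbb{E}\big|X_n(t)-X_n(s)\big|^{2q}\le C_q|t-s|^{q}$ for \emph{all} $s,t$, and then invoke the moment criterion for tightness with $q=2$. Your route produces the stronger, reusable byproduct of a uniform H\"older-type moment bound at every scale (not only on the dyadic grid), at the cost of the book-keeping for the decomposition and its two sub-dyadic end pieces (which you handle correctly via the linear interpolation, and which are indeed only improved by the extra $\sqrt{n}$ when $H=1/2$); the paper's route never needs moment bounds for non-dyadic increments and reads the modulus of continuity directly off the dyadic grid. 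Both arguments are complete and standard.
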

\begin{proof}
By Theorem 7.3 of \cite{Bil}, since
$X_n(0)=0$ almost surely for all $n\ge1$, it is enough to show
that for each positive $\varepsilon$
\begin{equation}
\label{tighness}\lim_{\delta\to0}\limsup_{n\to\infty} \mathbb{P}
\big (\omega(X _{n},\delta)\ge\varepsilon\big )=0,
\end{equation}
where $\omega(X_n,\cdot)$ stands for the modulus of continuity of $X_n$.

We leave the reader to check the following simple properties for $p,n\ge 1$ and $w\in\{0,1\}^p$: If $n>p$ then

\begin{equation}\label{self-sim2}
X_{n}(t_w+2^{-p})-X_{n}(t_w)=
\boldsymbol{\epsilon}(w)2^{-p/2}\cdot
\begin{cases}
X_{n-p}(w)(1)&\mbox{if $H<1/2$}\\
\displaystyle \sqrt{\frac{n-p}{n}}X_{n-p}(w)(1)&\mbox{if $H= 1/2$}
\end{cases}
\end{equation}
and if $1\le n\le p$ then
 \begin{equation}\label{self-sim3}
|X_n(t_w+2^{-p})-X_{n}(t_w)|\le
\begin{cases}
\displaystyle 2^{-p/2}/\sigma &\mbox{if $H<1/2$}\\
\displaystyle 2^{-p/2}/\sigma \sqrt{n}&\mbox{if $H=1/2$}\\
\end{cases}.
\end{equation}
Moreover, the proof of Lemma~\ref{prop1} shows that $\sup_{n\ge 1}\mathbb{E}(X_n(1)^{2K})<\infty$ for every integer $K\ge 1$. Consequently, it follows from (\ref{self-sim2}) and (\ref{self-sim3}) that there exists a family $\{V_{n,p,k}\}_{n,p\ge 1,0\le k\le 2^p-1}$ of positive random variables such that 
$$
\left | X_{n}\big ((k+1)2^{-p}\big)-
X_{n}\big (k2^{-p}\big ) \right|\le 2^{-p/2}V_{n,p,k},
$$
and for any integer $K\ge 1$, $C_K=\sup_{\substack{n,p\ge 1\\0\le k\le 2^p-1}}\mathbb{E}(V_{n,p,k}^{2K})<\infty$. The end of the proof is then standard.

Fix $\alpha\in(0,1/2)$ and $K$ a positive integer such that
$2K(1/2-\alpha)>1$. Define $\rho_p=C_K2^{p(1+2K(\alpha-1/2))}$ and $R_{p}=\sum_{j\ge p}\rho_{j}$ for $p\ge 1$. For all $n,p\ge 1$, our control of the moments of the dyadic increments of $X_n$ yields, using Markov inequalities, $
 \mathbb{P}\Big(\bigcup_{0\le k< 2^{p}}\ \left\{\left| X_{n}
\big ( (k+1)2^{-p}\big)- X_{n}\big (k2^{-p}\big )\right|
>2^{-p\alpha}\right \}\Big) \le \rho_{p}$.

Thus, $
\inf_{n\ge1} \mathbb{P}(E^n_p)\ge 1-R_p$ for all $p\ge1$, where
$$
E^n_p=\left\{ \forall\ j\ge p,\ \forall\ 0\le k< 2^{-j},\ \left|
X_{n}\big ((k+1)2^{-j}\big)- X_{n}\big (k2^{-j}\big )
\right| \le 2^{-j\alpha}\right\}.
$$
Also, on $E^{n}_{p}$ we have
$\displaystyle\omega\big (X_{n}
,2^{-p}\big )\le 2^{1-p\alpha}/(1-2^{-\alpha}).$
This yields
$$
\inf_{n\ge 1} \mathbb{P}\left( \omega\big (X_{n},2^{-p}\big )\le
2^{1-p\alpha}/(1-2^{-\alpha})\right) \ge
\inf_{n\ge1} \mathbb{P}(E^{n}_{p})\ge1-R_{p}.
$$
Since $\lim_{p\to\infty}R_{p}=0$, the previous inequality gives
(\ref{tighness}).
\end{proof}

\noindent
{\it Proof of Theorem~\ref{newTCL}.} Since for all $p\ge 1$ the random sequences $(X_{n}(w))_{n\ge 1}$, $w\in\{0,1\}^p$, are mutually independent, it follows from (\ref{self-sim2}) and Lemma~\ref{prop1} that for all $p\ge 1$,
 the sequence of vectors $V_{n}(p)=\big (X_{p+n}(t_w+2^{-p})-X_{p+n}(t_w)\big )_{w\in\{0,1\}^p}$ converges in law, as $n$ tends to $\infty$, to the distribution of the increments of the standard Brownian motion on the dyadic subintervals of $[0,1]$ of generation $p$. This is seen by taking the limit as $n$ tends to $\infty$ of  the characteristic function of $V_{n}(p)$ conditionally on $\sigma\big ( \boldsymbol{\epsilon}(w),\ w\in\{0,1\}^p\big )$ and then by using the fact that $\boldsymbol{\epsilon}(w)^2=1$. Consequently, the only possible weak limit of a subsequence of $(X_n)_{n\ge 1}$ is the standard Brownian motion. Then Lemma~\ref{tight} yields the desired conclusion.

\section{Proof of Theorem~\ref{newTCL1}}\label{sec4}
Theorem~\ref{newTCL1} follows from the next proposition. For $H\in (1/2,1)$ and $w\in T$ we denote $B_H(w)/\sigma_H$ by $\widetilde B_H(w)$ ($\widetilde B_H(\emptyset)=B_H/\sigma_H$ is denoted $\widetilde B_H$).
\begin{proposition}\label{ident2}
Let $(H_m)_{m\ge 1}$ be a $(1/2,1)$-valued sequence converging to $1/2$ as $n\to\infty$.

\begin{enumerate}
\item
The sequence $(\widetilde B_{H_m}(1))_{m\ge1}$ converges in law to the standard normal distribution as $m$
tends to $\infty$.

\item The laws of the random continuous functions $\widetilde B_{H_m}$, $m\ge 1$, form a tight family in the set of probability measures on $\mathcal{C}([0,1])$.

\item For every $p\ge 1$, the sequence of vectors $\widetilde V_{m}(p)=\big (\widetilde B_{H_m}(t_w+2^{-p})-\widetilde B_{H_m}(t_w)\big )_{w\in\{0,1\}^p}$ converges in law, as $m$ tends to $\infty$, to the distribution of the increments of the standard Brownian motion on the dyadic subintervals of $[0,1]$ of generation $p$.
\end{enumerate}
\end{proposition}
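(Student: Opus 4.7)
The plan is to mimic the strategy used for Theorem~\ref{newTCL}, replacing the moment computation of Lemma~\ref{ident} by one taken along the sequence $H_m\to 1/2$. For $H\in(1/2,1)$, set $Z_H=B_H(1)$, $\widetilde Z_H=Z_H/\sigma_H$, and $M_H^{(q)}=\mathbb{E}(\widetilde Z_H^{\,q})$; these moments exist for all $q\ge 1$ by Theorem~\ref{th2} (convergence in all $L^q$). The identity (\ref{FONC}) yields exactly the relation (\ref{Mq}) for $Z_H$, and after dividing by $\sigma_H^{\,q}$ it becomes
\[
(1-2^{1-qH}\mathbb{E}(\epsilon^q))\,M_H^{(q)}=2^{-qH}\sum_{k=1}^{q-1}\binom{q}{k}\mathbb{E}(\epsilon^k)\mathbb{E}(\epsilon^{q-k})\,M_H^{(k)}M_H^{(q-k)},
\]
where $\mathbb{E}(\epsilon^k)=2^{H-1}$ if $k$ is odd and $1$ if $k$ is even. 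For $q=2$ one gets $M_H^{(2)}=1$ from the very definition of $\sigma_H$, and for $q\ge 3$ the prefactor $1-2^{1-qH}\mathbb{E}(\epsilon^q)$ stays bounded away from zero uniformly for $H$ close to $1/2$. The proof of (1) is then an induction on $q$: assuming $M_{H_m}^{(k)}\to 0$ for odd $k\le q-1$ and $M_{H_m}^{(k)}\to M^{(k)}$ for even $k\le q-1$, I push $H_m\to 1/2$ in the displayed recursion. When $q$ is odd the leading coefficient $1-2^{H-(q-1)H}\cdot 2^{H-1}$ stays $>0$ and the right-hand side tends to $0$, giving $M_{H_m}^{(q)}\to 0$; when $q=2p$, only the even-even crossed terms survive the limit (odd $M_{H_m}^{(k)}$ vanish) and the limit identity reduces precisely to (\ref{momnorm}), which characterises the moments of $\mathcal{N}(0,1)$. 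Combined with the fact that the normal law is determined by its moments, this proves (1).

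For (2), I use the self-similarity (\ref{FONC3}) at generation $p$ with $t=t_w+2^{-p}$: since $S_{w_p}^{-1}\circ\cdots\circ S_{w_1}^{-1}$ sends $I_w$ onto $[0,1]$, one obtains
\[
\widetilde B_{H}(t_w+2^{-p})-\widetilde B_H(t_w)=\boldsymbol{\epsilon}(w)\,2^{-pH}\,\widetilde B_H(w)(1),
\]
so the $2K$-th moment of this dyadic increment equals $2^{-2pKH}M_H^{(2K)}$. Because $H_m\in(1/2,1)$, $2^{-2pKH_m}\le 2^{-pK}$, and by (1) the quantities $M_{H_m}^{(2K)}$ are bounded in $m$ for every fixed integer $K$. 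This reproduces the hypothesis used at the end of the proof of Lemma~\ref{tight}: there exist random variables $V_{m,p,k}$ with uniformly bounded $2K$-th moments such that every dyadic increment at level $p$ is bounded by $2^{-pH_m}V_{m,p,k}$. The Markov/Borel--Cantelli chaining argument written there applies verbatim (replacing $n$ by $m$) and yields tightness.

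For (3), I use once more the identity displayed above. The $2^p$ random variables $\widetilde B_{H_m}(w)(1)$, $w\in\{0,1\}^p$, are mutually independent and independent of $\sigma(\boldsymbol{\epsilon}(w):w\in\{0,1\}^p)$; by (1) each of them converges in law to $\mathcal{N}(0,1)$, so the vector $(\widetilde B_{H_m}(w)(1))_w$ converges in law to a standard Gaussian vector. Conditioning on $\boldsymbol{\epsilon}$ and invoking the symmetry of the normal distribution, the vector $\widetilde V_m(p)$ has the same limit in law as $(2^{-pH_m}N_w)_{w\in\{0,1\}^p}$ with $N_w$ iid $\mathcal{N}(0,1)$. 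Since $2^{-pH_m}\to 2^{-p/2}$, this gives precisely the distribution of the Brownian increments on the dyadic intervals of generation $p$, proving (3). Combined with (2), any weak limit point of $(\widetilde B_{H_m})$ has the finite-dimensional distributions of Brownian motion on every dyadic subdivision, hence is Brownian motion, which in turn yields Theorem~\ref{newTCL1}.

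The only genuinely delicate step is the induction in (1), namely checking that the moment recursion for $M_{H_m}^{(q)}$ passes to the limit with the correct cancellations; once the odd moments are shown to vanish in the limit, the even part of the recursion matches (\ref{momnorm}) exactly, and everything else is routine.
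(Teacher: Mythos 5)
Your proposal is correct and follows essentially the same route as the paper: pass to the limit $n\to\infty$ in (\ref{Mq}) to obtain a fixed-point relation for the normalized moments, run the odd/even induction to recover (\ref{momnorm}) and hence the normal law via the moment problem, and then reuse the chaining argument of Lemma~\ref{tight} and the conditional characteristic-function argument of Theorem~\ref{newTCL} together with $2^{-pH_m}\to 2^{-p/2}$. The only detail worth adding is the base case $M_{H_m}^{(1)}=1/\sigma_{H_m}=\sqrt{2-2^{2-2H_m}}\to 0$, which the induction needs already at $q=3$ since the cross-term sum there involves the first moment.
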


\begin{proof}
1. The proof is close to that of Lemma~\ref{ident}, but the differences deserve to be made explicit.

For every $q,m\ge 1$, let us denote $\mathbb{E}(\widetilde B_{H_m}(1)^q)$ by $\widetilde M^{(q)}_{m}$. Since $H=H_m>1/2$ and by definition $\widetilde B_{H_m}(1)=\sqrt{2-2^{2-2H_m}}B(1)=\sqrt{2-2^{2-2H_m}}Z$, taking the limit in (\ref{Mq}) as $n\to\infty$ thanks to Lemma~\ref{lemmoments} and using the fact that $\mathbb{E}(\epsilon_0^q)=2^{H-1}$ or $1$ according to $q$ is
odd or even, we obtain
\begin{equation}  \label{H'}
\widetilde M_{m}^{(q)}=
\begin{cases}
\displaystyle
2^{-(q-1)H_m}\widetilde M_{m}^{(q)}+2^{-qH_m}\widetilde S(q,m)
& \mbox{if $q$ is odd}, \\
\displaystyle 2^{1-qH_m}\widetilde M_{m}^{(q)}+2^{-qH_m}\widetilde S(q,m)
& \mbox{if $q$ is even}
\end{cases},
\end{equation}
where
$
\displaystyle \widetilde S(q,m)=\sum_{k=1}^{q-1}\binom{q}{k}\mathbb{E}(\epsilon^k)\mathbb{E}(\epsilon^{q-k}) \widetilde M_{m}^{(k)}\widetilde M_{m}^{(q-k)}.
$
Now we prove by induction that
\begin{enumerate}
\item for every $p\ge0$ one has the property $(\mathcal{P}_{2p})$:
$\widetilde M^{(2p)}=\lim_{m\to\infty} \widetilde M^{(2p)}_m$ exists. Moreover $\widetilde M^{(2)}=1$;

\item for every $p\ge0$ one has the property $(\mathcal{P}_{2p+1})$:
$\lim_{m\to\infty}\widetilde M^{(2p+1)}_m=0$;

\item the sequence $(\widetilde M^{(2p)})_{p\ge 1}$ obeys the same induction relation (\ref{momnorm}) as the sequence $(M^{(2p)})_{p\ge 1}$ defined in the proof of Lemma~\ref{ident}.
\end{enumerate}
The conclusion is then the same as in the proof of Lemma~\ref{ident}.

\smallskip

To prove that $(\mathcal{P}_{1})$ and $(\mathcal{P}_{2})$ hold we first recall that $H$ being fixed, we have seen in the proof of Lemma~\ref{ident} that $\displaystyle \mathbb{E}(Z_{n}^{2}) =2^{1-2H}\mathbb{E}(Z_{n-1}^{2})+\frac{1}{2}$. For $H>1/2$ this yields $\mathbb{E}(Z^2)=\lim_{n\to\infty}\mathbb{E}(Z_n^2)=(2-2^{2-2H})^{-1}$. Consequently, $\mathbb{E}(\widetilde B_H(1))=\sqrt{2-2^{2-2H}}\mathbb{E}(B(1))=\sqrt{2-2^{2-2H}}$ tends to $0$ as $H\searrow 1/2$ and $\mathbb{E}(\widetilde B_H(1)^2)=1$.

\smallskip

Suppose that $\big ((\mathcal{P}_{2k-1}),(\mathcal{P}_{2k})\big )$ holds for
$1\le k\le p-1$, with $p\ge 2$. The same approach as in the proof of Lemma~\ref{ident} implies that in (\ref{H'}), the term $2^{-(2p-1)H_m}\widetilde S(2p-1,m)$ in the right hand side of $\widetilde M_m^{(2p-1)}$ tends to 0 as $m$ tends to
$\infty$. This implies $\displaystyle \widetilde M_{m}^{(2p-1)}=2^{-(2p-2)H_m}\widetilde M_m^{(2p-1)}+o(1)$
as $m\to\infty$. Since $2^{-(2p-2)H_m}\le 2^{-(p-1)}<1$, this yields $\lim_{m\to\infty}M_m^{(2p-1)}=0$, that is to say $(\mathcal{P}_{2p-1})$. The induction's assumption also implies that in the right hand side of $\widetilde M_{m}^{(2p)}$, the term $\widetilde S(2p,m)$ tends to
$L=\sum_{k=1}^{p-1}\binom{2p}{2k}\widetilde M^{(2k)}\widetilde M^{(2p-2k)}$ as $m$ tends to $\infty$. Define $
L'=(2^p-2)^{-1} L$. By using (\ref{H'}) we deduce from the previous
lines that $M_{m}^{(2p)}=2^{1-2pH_m}M_m^{(2p)}+2^{-p}L+o(1)$ as $m\to\infty$. As $2^{1-2pH_m}$ tends to $2^{1-p}$ as $m\to\infty$, the definition of $L'$ implies $
M_{m}^{(2p)}-L'=2^{1-2pH_m}(M_m^{(2p)}-L')+o(1)
$ as $m\to\infty$. Since $2^{1-p}<1$ the last equality yields both $(\mathcal{P}_{2p})$ and (\ref{momnorm}) for $(\widetilde M^{(2p)})_{p\ge 1}$ instead of $(M^{(2p)})_{p\ge 1}$.

\smallskip

\noindent 2. If $H\in (1/2,1)$, $p\ge 1$ and $w\in \{0,1\}^p$, due to Theorem~\ref{th2}.1 we have
\begin{equation}\label{FONC1}
\widetilde B_H(t_w+2^{-p})-\widetilde B_H(t_w)=\boldsymbol{\epsilon}(w)2^{-pH}\widetilde B_H(w)(1).
\end{equation}
This implies $\left | \widetilde B_{H_m}(t_w+2^{-p})-\widetilde B_{H_m}(t_w) \right|\le 2^{-p/2} |\widetilde B_{H_m}(w)(1)|$. Moreover, the proof of 1. above shows that $C_K=\sup_{m\ge 1}\mathbb{E}(|\widetilde B_{H_m}(1)|^{2K})<\infty$ for every integer $K\ge 1$. We conclude as in the proof of Lemma~\ref{tight}.

\smallskip

\noindent 
3. Use (\ref{FONC1}) and the same arguments as in the proof of Theorem~\ref{newTCL} as well as the fact that $2^{-pH_m}$ tends to $2^{-p/2}$ as $m\to\infty$.
\end{proof}

\end{document}